\numberwithin{equation}{section}
\newcolumntype{C}[1]{>{\centering\arraybackslash}m{#1}}
\date{\today}
\theoremstyle{plain}
\theoremstyle{definition}
\theoremstyle{plain}
\newtheorem{theorem}{Theorem}[section]
\newtheorem{lemma}[theorem]{Lemma}
\newtheorem{corollary}[theorem]{Corollary}
\theoremstyle{definition}
\newtheorem{definition}[theorem]{Definition}
\newtheorem{?}[theorem]{Question}
\newtheorem{example}[theorem]{Example}
\title{Representations of integers as quotients of sums of distinct powers of three}
\author[K. Anders]{Katie Anders}
\address{University of Texas at Tyler}
\email{kanders@uttyler.edu}
\author[M. Dawsey]{Madeline Locus Dawsey}
\address{University of Texas at Tyler}
\email{mdawsey@uttyler.edu}
\author[B. Reznick]{Bruce Reznick}
\address{University of Illinois at Urbana-Champaign}
\email{reznick@illinois.edu}
\author[S. Sisneros-Thiry]{Simone Sisneros-Thiry}
\address{California State University- East Bay}
\email{simone.sisnerosthiry@csueastbay.edu}
\subjclass[2010]{11A63}
\keywords{digital representations, ternary representations, Newman polynomials} 
\begin{document}
\begin{abstract}
   Which integers can be written as a quotient of sums of distinct powers of three?  We outline our first steps toward an answer to this question, beginning with a necessary and almost sufficient  condition.  Then we discuss an algorithm that indicates whether it is possible to represent a given integer as a quotient of sums of
    distinct powers of three.  When the given integer is representable, this same algorithm generates all possible representations.  We develop a categorization of representations based on their connections to $0,1$-polynomials and give a complete description of the types of representations for all integers up to 364.  Finally, we discuss in detail the representations of 7, 22, 34, 64, 
    and 100, as well as some infinite families of integers.\\
\end{abstract}

\maketitle


\section{Introduction}\label{intro}

We investigate which integers can be written as a quotient of sums of distinct powers of 3. Such an integer $m$ is of the form 
\begin{equation}\label{1.1}
m=\frac{3^{a_1} + \cdots + 3^{a_{\ell-1}}+3^{a_\ell}}{3^{b_1} +\cdots  + 3^{b_{n-1}} + 3^{b_n}}
= 3^{a_\ell-b_n}\cdot
\frac{3^{a_1-a_\ell} + \cdots + 3^{a_{\ell-1} - a_\ell} + 3^{0}}{3^{b_1-b_n} + \cdots + 3^{b_{n-1} - b_n} +3^{0}},
\end{equation}
where $a_1, a_2,\ldots, a_\ell$ and $b_1, b_2, \ldots, b_n$ are non-negative integers with $a_1 > a_2>\cdots > a_\ell$, $b_1 >b_2> \cdots > b_n$, and $a_\ell-b_n\geq0$.  We can restrict our attention to integers $m$ congruent to $1\bmod 3$.  Why?  Since the numerator and denominator of the right side of \eqref{1.1} are congruent to $1\bmod 3$, we have $m\equiv1\pmod 3$ if $a_\ell-b_n=0$ and $m\equiv0\pmod 3$ if $a_\ell-b_n>0$. 
If $m\equiv0\pmod3$, then we can write $m=3^km_1$, where $m_1\equiv1\pmod3$.  Consequently, we will only consider integers congruent to $1\bmod 3$.

A straightforward estimate, given below as Theorem \ref{T:feasible}, shows
that any integer written as a quotient of sums of distinct powers of three must be contained in
\begin{equation}\label{feasible}
\bigcup_{r=0}^\infty \bigl( \tfrac 23\cdot3^r, \tfrac 32 \cdot 3^r \bigr).
\end{equation}
Can every integer congruent to $1\bmod 3$ belonging to the union $\eqref{feasible}$ be written as a quotient of sums of distinct powers of 3?  No, this necessary condition is nearly sufficient but not quite. The smallest integers $m\equiv 1 \pmod 3$ that belong to the union of intervals in \eqref{feasible} but have no representation of the form \eqref{1.1} are
$m=529$, $m=592$, $m=601$, and $m=616$, and there are only ten exceptions less than $6.2 \times 10^6$. 

 A very preliminary version of some of the material in this paper was presented by the third author in  2015 \cite{R}, and this presentation
included the next smallest counterexample,
5368, which was found by his student Sakulbuth Ekvittayaniphon.
Independently, a few years later, Jeffrey Shallit and his student Sajed 
Haque \cite{H} found a total of eleven exceptions less than 107 million. See also A339636 in \cite{OEIS}, in
which it is stated that ``A simple automaton-based (or breadth-first search) algorithm can establish in $O(n)$ time whether 
$n$ is \ldots,'' in our terms, representable. These exceptions are also discussed in \cite[p,4]{BMRS}; the
distinction of local and universal representations does not arise. 
See also the thesis of the fourth author \cite[pp.128-130]{SST}.

\subsection{Definitions, Notation, and Background Material}\label{DefinitionsNotationBackgroundSection} For $r \ge 1$, let
\begin{equation*}\label{defA}
\mathcal A_r = \left \{ 1 + \sum_{k=1}^{r-1} \epsilon_k 3^k + 3^r\ \bigg| \ \epsilon_k \in \{0,1\} \right \}\quad \text{ and }
 \quad \mathcal A = \{1\} \cup \bigcup_{r=1}^\infty \mathcal A_r.
\end{equation*}
Returning to \eqref{1.1}, we see that in the rightmost fraction, both the numerator and denominator are elements of $\mathcal{A}$.  We use the following notation for the set of 0,1-polynomials with constant term 1, also known as {\it Newman polynomials}, which are investigated more thoroughly in \cite{OP}:
\begin{equation*}\label{defP}
\mathcal P_r = \left \{ 1 + \sum_{k=1}^{r-1} \epsilon_k x^k + x^r \ \bigg| \ \epsilon_k \in \{0,1\} \right \}\quad \text{ and }
 \quad \mathcal P = \{1\} \cup \bigcup_{r=1}^\infty \mathcal P_r.
\end{equation*}
Note that $m \in \mathcal A_r$ if and only if there exists a polynomial $p(x) \in \mathcal P_r$ such that $m = p(3)$. With this notation,
the subject of our investigation is $\mathcal A / \mathcal A$, defined as the set of 
integers congruent to $1 \bmod 3$ that can be written as a quotient of two elements of $\mathcal A$. 
Since $1 \in \mathcal A$, we see that 1 can be the denominator of such a quotient, and it follows that $\mathcal A \subseteq \mathcal A / \mathcal A$. There are, however, elements of $\mathcal A / \mathcal A$ that do not belong to $\mathcal{A}$.  One such example is 7, which we shall consider shortly.

We use the notation $w=[\epsilon_r\;\epsilon_{r-1}\;\dots\;\epsilon_1\;\epsilon_0]_3$ to indicate the standard base 3 representation of the integer $w$, where the digits $\epsilon_r,\dots,\epsilon_0\in\{0,1,2\}$.  For example, the standard base 3 representation of the integer 7 is $7=[21]_3$.  We see from this standard base 3 representation that 7 does not belong to $\mathcal{A}$; because
\begin{equation}\label{reps of 7}
7 = \frac{28}{4} = \frac{[1001]_3}{[11]_3} = \frac{3^3+1}{3+1} = \frac{p(3)}{q(3)},
\end{equation}
where $p(x) = x^3 + 1 \in \mathcal P_3$ and $q(x) = x + 1 \in \mathcal P_1$, we have that $7\in\mathcal{A}/\mathcal{A}$. 

Throughout this document, we say an integer is \textit{representable} if it can be written as a quotient of sums of distinct powers of 3 using any of the types of formats illustrated in \eqref{reps of 7}. In the example of the integer 7, we notice that $\frac{p(x)}{q(x)} = x^2 - x + 1$, which, when evaluated at any integer $x$, will yield an integer. 
We call such a representation {\it universal}.
Not every representation is universal; those that are not we call \emph{local}, and the smallest example of a local representation in base 3 is 
\[
22 = \frac{22\cdot37}{37} = \frac{814}{37} = \frac{3^6 + 3^4 + 3 + 1}{3^3 + 3^2 + 1}=\frac{p(3)}{q(3)},
\]
where $p(x)=x^6 + x^4 + x + 1\in\mathcal{P}_6$ and $q(x)=x^3 + x^2 + 1\in\mathcal{P}_3$.  Note that $q(x)$ does not divide $p(x)$ in this example, and so in general the quotient $\frac{p(b)}{q(b)}$ need not be an integer when evaluated at an integer $b \neq 3$. As an example, $\frac{p(1)}{q(1)}= \frac 43$. Note also that $22 =
3^3-3^2 + 3 + 1$, and it is not surprising that
\[
x^6 + x^4 + x + 1 = (x^3 + x^2+1)(x^3 -x^2 +x+1) + x^3(x-3),
\]
since the polynomials involved in the representation of $22$ indicate that the remainder from polynomial division must be divisible by $x-3$.  We now state the definitions of \textit{universal representation} and \textit{local representation} more formally.

\begin{definition}
Let $m$ be an integer such that $m\equiv 1\pmod 3$ and $m=\frac{p(3)}{q(3)}$, where $p(x), q(x)\in \mathcal{P}$.  If $q(x)\mid p(x)$, then the representation of $m$ as $\frac{p(3)}{q(3)}$ is \textit{universal}.  If $q(x)\nmid p(x)$, then this representation of $m$ is \textit{local}.
\end{definition}

Notice that any integer $m\equiv1\pmod{3}$ which can be written as a sum of distinct powers of 3 has a universal representation $m=\frac{p(3)}{q(3)}$ with $q(x)=1$.  Throughout, we refer to a universal representation for which $q(x)=1$ as a \emph{trivial} universal representation.

Next we prove the aforementioned theorem that states that an integer written as a quotient of sums of distinct powers of 3 must belong to the union given in \eqref{feasible}.
\begin{theorem}\label{T:feasible}
For any integer $r$, let $\mathcal I_r = \left( \frac 23\cdot3^r, \frac 32 \cdot 3^r \right)$.  If $m \in \mathcal A/\mathcal A$, then $m \in \mathcal I_r$ for some $r$.  Moreover, if $m = \frac{p(3)}{q(3)}$ for some
$p(x),q(x) \in \mathcal P$, then there exists an integer $e$ such that
$p(x) \in \mathcal P_{e+r}$ and $q(x) \in \mathcal P_{e}$. 
\end{theorem}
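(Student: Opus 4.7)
My plan is to base everything on sharp size bounds for $p(3)$ when $p(x) \in \mathcal{P}_s$. Because every $p \in \mathcal{P}_s$ has leading and constant coefficients equal to $1$, the value $p(3)$ is minimized when all intermediate coefficients are $0$ and maximized when they are all $1$, yielding
\[
3^s + 1 \leq p(3) \leq 3^s + 3^{s-1} + \cdots + 3 + 1 = \tfrac{3^{s+1}-1}{2}.
\]
In particular, for $s \geq 1$ these give the strict bounds $3^s < p(3) < \tfrac{3}{2}\cdot 3^s$, and for $s = 0$ (under the natural convention $\mathcal{P}_0 := \{1\}$) we have $p(3) = 1$, again satisfying $3^s \leq p(3) < \tfrac{3}{2}\cdot 3^s$.

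Given any representation $m = p(3)/q(3)$ with $p \in \mathcal{P}_s$ and $q \in \mathcal{P}_t$, I would simply set $e := t$ and $r := s - t$, which immediately gives $p \in \mathcal{P}_{e+r}$ and $q \in \mathcal{P}_e$ as required by the ``moreover'' clause. The first real step is verifying $r \geq 0$: since $m$ is a positive integer we have $p(3) \geq q(3)$, but if $s < t$ the bounds above would force
\[
p(3) < \tfrac{3}{2}\cdot 3^s \leq \tfrac{3}{2}\cdot 3^{t-1} < 3^t \leq q(3),
\]
a contradiction. Second, dividing the upper bound on $p(3)$ by the lower bound on $q(3)$ gives $m < \bigl((3/2)\cdot 3^s\bigr)/3^t = \tfrac{3}{2}\cdot 3^r$, and dividing the lower bound on $p(3)$ by the upper bound on $q(3)$ gives $m > 3^s/\bigl((3/2)\cdot 3^t\bigr) = \tfrac{2}{3}\cdot 3^r$, placing $m$ in $\mathcal{I}_r$.

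I do not anticipate a serious obstacle, since the entire argument reduces to squeezing $p(3)$ and $q(3)$ between their all-zeros and all-ones extremes and then combining the resulting inequalities. The only mild bookkeeping concerns the corner cases $p = 1$ or $q = 1$ (elements of $\mathcal{P}$ but not of any $\mathcal{P}_r$ with $r \geq 1$), which fit uniformly once we interpret $\mathcal{P}_0$ as $\{1\}$. It is also worth noting in passing that the intervals $\mathcal{I}_r$ are pairwise disjoint—the right endpoint of $\mathcal{I}_r$ is $\tfrac{3}{2}\cdot 3^r$ while the left endpoint of $\mathcal{I}_{r+1}$ is $2\cdot 3^r$—so $r$ is determined by $m$ alone, even though the particular representation $(p,q)$ and the value $e$ are not.
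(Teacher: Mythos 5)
Your proof is correct and follows essentially the same route as the paper's: bound $p(3)$ strictly between $3^s$ and $\tfrac32\cdot 3^s$ for $p\in\mathcal P_s$ (and likewise for $q$), then divide to land in $\mathcal I_{s-t}$, with the disjointness of the intervals pinning down $r=s-t$. The only differences are cosmetic additions on your part—the explicit check that $r\geq 0$ and the explicit treatment of the $\mathcal P_0=\{1\}$ corner case—which the paper leaves implicit.
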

\begin{proof}
Suppose $p(x) \in \mathcal P_d$.  Then
\[
3^d < 1 + 3^d  \le p(3) \le 1 + \sum_{k=1}^{d-1} 3^k + 3^d = \frac{3\cdot 3^d - 1}{3-1} < \frac 32 \cdot 3^d.
\]
Accordingly, if $q(x) \in \mathcal P_e$, then $3^e < q(3) < \frac{3}{2}\cdot 3^e$, so
\[
\frac{2}{3}\cdot 3^{d-e}=\frac{3^d}{\frac 32 \cdot 3^e} < \frac {p(3)}{q(3)} 
< \frac{\frac 32 \cdot 3^d}{3^e}=  \frac 32 \cdot 3^{d-e},
\]
and thus $\frac{p(3)}{q(3)} \in \mathcal{I}_{d-e}$.
This inequality holds regardless of whether $\frac {p(3)}{q(3)}$ is an integer. If $m = \frac {p(3)}{q(3)}$, then $m \in \mathcal I_{d-e}$,
establishing the necessary condition; if $m \in \mathcal I_r$, then $d = e + r$.
\end{proof}

What do the intervals in Theorem \ref{T:feasible} remind us of? This question led to our initial interest in these representations of integers.  In 2019, Athreya, Tyson, and the third author studied the
standard Cantor set,
\[
C = \left\{ \sum_{k=1}^\infty \frac{\alpha_k}{3^k} \ \bigg \vert \ \alpha_k \in \{0,2\} \right\},
\]
and they proved in \cite[Theorem 1]{ART} that 
\begin{equation}\label{ART union}
\left\{\frac uv \ \bigg\vert\  u, v \in C, v\neq 0\right\} = \bigcup_{r= - \infty}^\infty \left[ \frac23\cdot 
3^r, \frac 32 \cdot 3^r\right].
\end{equation}
Note the similarity between this union and that given in $\eqref{feasible}$.  We also highlight and explain some of the differences.  Since we are only considering integers in our work, the union in \eqref{feasible} is over $r \ge 0$. Another difference is that we only consider integers congruent to $1\bmod 3$, while in \eqref{ART union}, there is no restriction regarding the congruence class of the integer mod 3. For example, 
$\frac 13, \frac 23 \in C$, so $\frac{2/3}{1/3}=2$ is a quotient of elements of the Cantor set, but of course $2\equiv2\pmod{3}$ cannot be represented as a quotient of sums of distinct powers of 3, as we have discussed. 

The usual construction of the Cantor set involves a sequence of sets $C_n$ consisting of $2^n$ closed intervals, each of length $3^{-n}$. Consider
the set of quotients of left endpoints of these intervals:
\begin{align*}
 \left\{ \frac{\sum_{k=1}^d \frac{\alpha_k}{3^k}}{\sum_{j=1}^e \frac{\beta_j}{3^j}} \ \bigg\vert \ \alpha_k, \beta_j \in \{0,2\} \right\} 
&= \left\{\frac{ 2\cdot 3^{-d}\sum_{k=1}^d \frac 12\alpha_k\cdot3^{d-k}}
{2\cdot 3^{-e} \sum_{j=1}^e \frac 12\beta_j\cdot3^{e-j}} \ \bigg\vert \ \alpha_k, \beta_j \in \{0,2\} \right\} \\
&= \left\{ \frac{ 3^{e-d} \cdot\sum_{k=1}^d \frac 12\alpha_k\cdot3^{d-k}}
{\sum_{j=1}^e \frac 12\beta_j\cdot3^{e-j}} \ \bigg\vert \ \alpha_k, \beta_j \in \{0,2\} \right\}.
\end{align*}
Each element of this set is a quotient of two elements of $\mathcal P$, evaluated at $x=3$, multiplied by a power of $3$. 

The question of which polynomials are quotients of two elements of $\mathcal P$ seems to be both open and hard. One observation is that if $r(x) = \frac{p(x)}{q(x)}$,
then the zeros of $r(x)$ are a subset of the zeros of $p(x)$. Determining the zeros of polynomials in $\mathcal P$ also seems to be a very hard question; see \cite{OP}.

In 1987, John Loxton and Alf van der Poorten wrote a paper entitled ``An awful problem 
about integers in base four" \cite{LvdP}, which was dedicated to proving that every odd integer can be written
 as a quotient of sums and differences of powers of 4. What was ``awful" was only that there was not
  an elegant top-down solution.  Their paper reports that Selfridge and LaCampagne asked a similar question for 3, and the
   authors mentioned that if differences are removed, as in the case we are investigating,
``There is some difficulty in describing which integers can occur."

We concur with this quote, and in this paper we report our findings and early progress in determining which integers congruent to $1\bmod 3$ can be written as quotients of sums of distinct powers of 3.  In Section \ref{special families}, we exhibit several families of integers having only universal representations, and then we consider $100$, the smallest integer having only universal representations but not belonging to any of these families.  In contrast to the integers discussed in Section \ref{special families}, some integers have only local representations.  We investigate two such examples, $22$ and $34$, in Section \ref{OnlyLocalSection}.  Then, in Section \ref{64Section}, we present $64$ as an example of an integer having both nontrivial universal representations and local representations.  In Section \ref{catalog}, we provide a table that indicates for each integer $m\equiv1\pmod{3}$ contained in the union \eqref{feasible} up to $m=364$ whether $m$ has a universal representation and whether $m$ has a local representation.  Finally, we outline further directions to explore in Section \ref{FutureWork}.

The following examples illustrate a few of the possible combinations of types of representations that can exist for a single integer $m\equiv1\pmod{3}$.

\begin{example}\label{31 example}
The integer $31=3^3+3+1$ has a trivial universal representation with $p(x) = x^3 + x + 1$
and $q(x) = 1$. It is also the case that if $g(x) = x^5 - x^3 + 1$ and $h(x) =
x^2 - x + 1$, then $\frac{g(3)}{h(3)}=\frac{217}{7}=31$, and $h(x)\nmid g(x)$.  Since neither $g(x)$ nor $h(x)$ is in $\mathcal P$, we know that $\frac{g(3)}{h(3)}$ is not a universal representation of $31$. However, with a suitable common multiplier for $g(x)$ and $h(x)$, we can obtain polynomials in $\mathcal{P}$ and see that $31$ has a local representation in addition to its trivial universal representation.  In fact, $31$ actually has many local representations, and the algorithm in Section \ref{sec:algorithm} describes how to find them. As a specific example of such a local representation, observe that
\[
\frac{x^5 - x^3 + 1}{x^2-x+1} = \frac{(x^5-x^3+1)(x^4 + x^3+x^2+x+1)}
{(x^2-x+1)(x^4 + x^3+x^2+x+1)} = \frac{x^9 + x^8 + x^2 + x + 1}{x^6 + x^4+x^3+x^2+1}.
\]
\end{example}

\begin{example}\label{37 example}
As another example of an integer that has both a trivial universal representation and a local representation, consider
$37 = 3^3 + 3^2 + 1$.   We can also write
\[
37 =  \frac{3^6 - 3^3 + 1}{3^3 - 3^2 + 1}.
\]
The polynomials $x^6-x^3+1$ and $x^3-x^2+1$ are not in $\mathcal{P}$, but if we multiply each by $x^5+x^4+x^3+x^2+x+1$,
we obtain polynomials that do belong to $\mathcal{P}$, and we have $37 =\frac{p(3)}{q(3)}$,
where $p(x)=x^{11} + x^{10} + x^9 + x^2 + x + 1$ and $q(x)=x^8 + x^5+x^4+x^3+x+1$.  To conclude that this is a local representation, we must have that $q(x)\nmid p(x)$.  We could show this with polynomial long division, or we can consider evaluating the quotient $\frac{p(x)}{q(x)}$ at $x=2$, for example.  This yields $\frac{3591}{315}$, which is not an integer and tells us the representation of $37$ as $\frac{p(3)}{q(3)}$ must be local.
\end{example}

\begin{example}\label{841 example}
It is also possible for an integer to have two universal representations with different polynomial quotients.  For example,
$841 = 3^6 + 3^4 + 3^3 + 3 + 1$ is a trivial universal representation of $841$ with the polynomial quotient being $x^6 + x^4 + x^3 +x + 1$. In addition, $841$ has a nontrivial universal representation with polynomial quotient $x^6+x^5-x^4-2x^3+x+1$, since $841=3^6 + 3^5 - 3^4 - 2\cdot 3^3 + 3 + 1=
\frac{p(3)}{q(3)}$ with $p(x)=x^{15} + x^{14} + x^{11} + x^5 + x^4 + x^2 + x + 1$ and $q(x)=x^9 + x^7 + x^6 + x^5 + x^4 + x^3 + x^2 + 1$. 
\end{example}

For a given integer $m$, we have developed an algorithm for determining whether $m$ is an element of $\mathcal{A}/\mathcal{A}$. This algorithm is a refinement of a multiplication transducer, which we shall discuss in more detail in Section \ref{sec:algorithm}.  As noted previously, we need only consider the integers $m \equiv 1 \pmod{3}$ belonging to an interval of the form given in Theorem \ref{T:feasible}.  For such $m$, this algorithm explores all possibilities for multiples of $m$ by $q$, a sum of distinct powers of 3, such that $mq = p$ is also a sum of distinct powers of 3. This can be done manually or through the construction of a directed graph that represents all possible multiplications.  The algorithm leverages intuition similar to more familiar multiplication algorithms in base 10, performing multiplication in steps, starting with the ones digit, and ``carrying" powers of 3. However, our algorithm does not require the summation of partial products. 

 Rather than using our algorithm to compute the product $p$ of given integers $m$ and $q$, we instead start with a given integer $m\equiv1\pmod3$ and construct both $p$ and $q$ simultaneously to ensure that they are elements of $\mathcal{A}$, choosing only paths forward that preserve this necessary condition. Thus, we begin with $3^0=1$ copy of $m$, and then for each positive power $i$ of 3, we need to choose whether or not to include $3^i$ additional copies of $m$ in $q$, building the base 3 representation of $q$ as we move through the algorithm.

\subsection{The Algorithm, Digraphs, and Multiplication Transducers}\label{sec:algorithm}

We begin with a familiar example, using the algorithm to construct the quotient given in \eqref{reps of 7} for $m=7$.  Denote this quotient by $7=\frac{p}{q}\in\mathcal{A}/\mathcal{A}$.  The procedure of constructing this quotient involves iterating through steps, where each step ``records" the $i$th digit of the product $7\cdot q=p$ in base 3, as well as the $i$th digit of $q$.  Since $p,q\in\mathcal{A}$, it is necessary to record a digit of 1 in the $3^0$ place of the base 3 expansions of both $p$ and $q$.  Write the base 3 expansion of $7$ as $7 = (2\cdot3^1+1)\cdot3^0$.  After recording 1 into the $3^0$ place of $p$, the remaining unrecorded value is $(2\cdot3^1)\cdot3^0=2\cdot3^1$ to ``carry" into the next step, which is the $3^1$ step.  Note that the number of copies of $3^1$ in this unrecorded value that we must carry into the next step is $2$, which is congruent to $2\bmod{3}$.  In order to record either $0$ or $1$ into the $3^1$ place of $p=7\cdot q$, we need to first add more copies of $3^1$ to $q$. Hence we add $m=7$ to  the carried value of $2$, which  gives us  $ (m+2)\cdot3^1  =  (7+2)\cdot3^1= 9\cdot3^1$.  To account for this addition of $m\cdot3^1=7\cdot3^1$ into $7\cdot q=p$, we must record a digit of 1 in the $3^1$ place of $q$.  Now, we observe that $9\cdot3^1 = 3\cdot3^2$, so $3$ is the value which we must carry at the $3^2$ step.  Since 3 is a sum of distinct powers of 3, we can stop iterating and complete the construction.  In fact, we have that $3\cdot3^2=3^3$, so we record a digit of 0 in the $3^2$ place of $p$ and a digit of 1 in the $3^3$ place of $p$.  The result of the algorithm is therefore $7\cdot q=p$, which is given by the equation $7\cdot(3^1 + 3^0) = 3^3 + 3^0$.  It follows that $7=  \frac{p}{q} = \frac{[1001]_3}{[11]_3}$, the representation from \eqref{reps of 7}. A complete description of all possible representations of 7 is given by Theorem \ref{T:onlyuniversal}  Part (3) with $r = k=1$. 

In general, we can perform the algorithm iteratively by following the steps below, and in the case that $m = \frac{p}{q} \in \mathcal{A}/ \mathcal{A}$, each step $i$ produces the digit of the $i$th power of 3 in the base 3 expansions of both $q$ and $p$. We record either $0\cdot3^i$ or $1\cdot3^i$ into the base 3 expansions of $q$ and $p$ in the $i$th step, depending on the result of the previous step, and then we consider the remaining unrecorded value as carrying multiples of $3^{i+1}$ into the next step.  We refer to the number of copies of $3^{i+1}$ which are carried into the next step as the \textit{carry value}.  As described in the following procedure, we use an initial carry value of 0 in Step 0.  The general algorithm is that for Step $i$, where $i\geq1$:
\begin{enumerate}
\item Determine the number of copies of $3^i$ recorded into the base 3 expansions of $p$ and $q$ in Step $i$ by following the bullet point below which corresponds to the carry value at the end of Step $i-1$.
\begin{itemize}
    \item If the carry value is congruent to $1 \bmod 3$: record $0\cdot3^i$ into $q$, record $1\cdot3^i$ into $p$, and subtract $1$ from the carry value.
    \item If the carry value is congruent to $2 \bmod 3$: add $m$ to the carry value, record $1\cdot3^i$ into $q$, and record $0\cdot3^i$ into $p$.
    \item If the carry value is congruent to $0 \bmod 3$, we have a choice: either
    \begin{itemize}
        \item add $m$ to the carry value, record $1\cdot3^i$ into $q$, record $1\cdot3^i$ into $p$, and subtract $1$ from the carry value (note: we always make this choice at Step $0$), or
        \item record $0\cdot3^i$ into $q$, and record $0\cdot3^i$ into $p$.  
    \end{itemize}
\end{itemize}
\item After recording $0\cdot3^i$ or $1\cdot3^i$ into $p$ and $q$ according to the conditions above and subtracting 1 from the carry value if necessary, the result will be divisible by $3^{i+1}$.  Factor out $3^{i+1}$ from this result, and what remains is the new carry value at the end of Step $i$.
\item Move to Step $i+1$ using this new carry value.
\end{enumerate}

The algorithm terminates when either (i) the carry value is $w$, a sum of distinct powers of 3, or  (ii) all possible destinations from a given Step $i$ have already been visited without arriving at a sum of distinct powers of 3. 

In Case (i), the base 3 expansion of the integer $q$ is complete when the algorithm terminates, and the base 3 expansion of the integer $p$ is completed by appending the base 3 representation of $w$ to the left of the values already recorded for $p$. In our example $m=7$, the algorithm terminated when we arrived at a carry value of 3 from $9*3^1 = 3*3^2$.  The algorithm terminated because the carry value $w=3$ is a sum of distinct powers of 3. We then completed $p$ by appending $[{\bf10}]_3$, the base 3 expansion of $3$, to arrive at $7 = \frac{[{\bf10}01]_3}{[11]_3}$. 

In Case (ii), the integer $m$ is not representable, because the algorithm enters an infinite loop and never arrives at a finite quotient. This is precisely what happens with $m=529$, $m=592$, $m=601$, $m=616$, and the other integers that are congruent to $1\bmod 3$ and belong to the union given in \eqref{feasible} but cannot be represented as quotients of sums of distinct powers of 3.

The remarks below provide additional justification as to why the algorithm builds a representation of $m$ as $\frac{p}{q}$, where $p,q\in\mathcal{A}$.

\begin{itemize}

\item The procedure builds the base 3 expansions of $q$ and $p$ one digit at a time based on the powers of 3 required to obtain $mq=p$, where $p$ and $q$ are both sums of distinct powers of 3.  Since $p$ and $q$ cannot have any digits of 2 in their base 3 expansions, we must add $m$ to any carry values congruent to $2\bmod3$ in order to record either $0\cdot3^i$ or $1\cdot3^i$ for $p$ in Step $i$.  This addition corresponds to adding $1\cdot3^i$ to $q$, since then we will have added $m\cdot3^i$ to the product $mq=p$.  After adding $m\equiv1\pmod{3}$ to such a carry value in Part (1) of the algorithm, the result will be divisible by $3^{i+1}$, so we may move to Part (2) of the algorithm.  

\item The carry value is the part of the remaining unrecorded value, after recording $0\cdot3^i$ or $1\cdot3^i$ into $p$ in Step $i$, which carries multiples of $3^{i+1}$ into the next step.  For this reason, we must subtract $1$ from the carry value every time we record $1\cdot3^i$ into $p$, so that what remains is the part of the carry value which was unrecorded in Step $i$.

\item When a carry value is congruent to $0\bmod3$, the two choices in Part (1) of the algorithm correspond to the two different ways of arriving at a multiple of $3^{i+1}$ to be carried from Step $i$ into Step $i+1$.  The first way is to add $m$ to the carry value and correspondingly record $1\cdot3^i$ into $q$, then subtract $1\cdot3^i$ from the carry value after recording a digit of $1$ into the $3^i$ place of $p$, and move to Part (2) of the algorithm with the resulting multiple of $3^{i+1}$.  The second way is to simply factor out another 3 from the carry value, which is divisible by 3 in this case, to get a multiple of $3^{i+1}$.

\end{itemize}

The algorithm can be illustrated through a directed graph with the carry values as vertices. We present the graph for $m=22$ in Figure \ref{22digraph}.  In Example \ref{22 example}, we discuss how to use the directed graph to obtain representations of $22$ as a quotient. 

\begin{example}\label{22 example}  Let's consider the algorithm for $m=22$.  We will use Figure \ref{22digraph} to keep track of what happens in each step.  

\begin{center}
\begin{figure}[h!]    

\tikzset{every picture/.style={line width=0.75pt}} 

\begin{tikzpicture}[x=0.75pt,y=0.75pt,yscale=-1,xscale=1]

\draw   (54,81.5) .. controls (54,71.84) and (61.84,64) .. (71.5,64) .. controls (81.16,64) and (89,71.84) .. (89,81.5) .. controls (89,91.16) and (81.16,99) .. (71.5,99) .. controls (61.84,99) and (54,91.16) .. (54,81.5) -- cycle ;
\draw    (102.5,81) -- (137.5,81) ;
\draw [shift={(139.5,81)}, rotate = 180] [color={rgb, 255:red, 0; green, 0; blue, 0 }  ][line width=0.75]    (10.93,-3.29) .. controls (6.95,-1.4) and (3.31,-0.3) .. (0,0) .. controls (3.31,0.3) and (6.95,1.4) .. (10.93,3.29)   ;
\draw   (149,81.5) .. controls (149,71.84) and (156.84,64) .. (166.5,64) .. controls (176.16,64) and (184,71.84) .. (184,81.5) .. controls (184,91.16) and (176.16,99) .. (166.5,99) .. controls (156.84,99) and (149,91.16) .. (149,81.5) -- cycle ;
\draw    (197.5,81) -- (232.5,81) ;
\draw [shift={(234.5,81)}, rotate = 180] [color={rgb, 255:red, 0; green, 0; blue, 0 }  ][line width=0.75]    (10.93,-3.29) .. controls (6.95,-1.4) and (3.31,-0.3) .. (0,0) .. controls (3.31,0.3) and (6.95,1.4) .. (10.93,3.29)   ;
\draw   (246,80.5) .. controls (246,70.84) and (253.84,63) .. (263.5,63) .. controls (273.16,63) and (281,70.84) .. (281,80.5) .. controls (281,90.16) and (273.16,98) .. (263.5,98) .. controls (253.84,98) and (246,90.16) .. (246,80.5) -- cycle ;
\draw    (294.5,80) -- (329.5,80) ;
\draw [shift={(331.5,80)}, rotate = 180] [color={rgb, 255:red, 0; green, 0; blue, 0 }  ][line width=0.75]    (10.93,-3.29) .. controls (6.95,-1.4) and (3.31,-0.3) .. (0,0) .. controls (3.31,0.3) and (6.95,1.4) .. (10.93,3.29)   ;
\draw   (341,81.5) .. controls (341,71.84) and (348.84,64) .. (358.5,64) .. controls (368.16,64) and (376,71.84) .. (376,81.5) .. controls (376,91.16) and (368.16,99) .. (358.5,99) .. controls (348.84,99) and (341,91.16) .. (341,81.5) -- cycle ;
\draw    (389.5,81) -- (424.5,81) ;
\draw [shift={(426.5,81)}, rotate = 180] [color={rgb, 255:red, 0; green, 0; blue, 0 }  ][line width=0.75]    (10.93,-3.29) .. controls (6.95,-1.4) and (3.31,-0.3) .. (0,0) .. controls (3.31,0.3) and (6.95,1.4) .. (10.93,3.29)   ;
\draw   (436,82.5) .. controls (436,72.84) and (444.02,65) .. (453.91,65) .. controls (463.8,65) and (471.82,72.84) .. (471.82,82.5) .. controls (471.82,92.16) and (463.8,100) .. (453.91,100) .. controls (444.02,100) and (436,92.16) .. (436,82.5) -- cycle ;

\draw (65,73) node [anchor=north west][inner sep=0.75pt]   [align=left] {0};
\draw (161,74) node [anchor=north west][inner sep=0.75pt]   [align=left] {7};
\draw (259,73) node [anchor=north west][inner sep=0.75pt]   [align=left] {2};
\draw (352,74) node [anchor=north west][inner sep=0.75pt]   [align=left] {8};
\draw (444.41,74) node [anchor=north west][inner sep=0.75pt]   [align=left] {10};
\draw (108,97) node [anchor=north west][inner sep=0.75pt]  [font=\small] [align=left] {$\displaystyle \mathbf{\frac{1}{1}}$};
\draw (202,98) node [anchor=north west][inner sep=0.75pt]  [font=\small] [align=left] {$\displaystyle \frac{\mathbf{1}\textcolor[rgb]{0.5,0.5,0.5}{1}}{\mathbf{0}\textcolor[rgb]{0.5,0.5,0.5}{1}}$};
\draw (292,94) node [anchor=north west][inner sep=0.75pt]  [font=\small] [align=left] {$\displaystyle \frac{\mathbf{0}\textcolor[rgb]{0.5,0.5,0.5}{11}}{\mathbf{1}\textcolor[rgb]{0.5,0.5,0.5}{01}}$};
\draw (387,96) node [anchor=north west][inner sep=0.75pt]  [font=\small] [align=left] {$\displaystyle \frac{\mathbf{0}\textcolor[rgb]{0.5,0.5,0.5}{011}}{\mathbf{1}\textcolor[rgb]{0.5,0.5,0.5}{101}}$};

\end{tikzpicture}
\caption{Directed graph of the steps of the algorithm for $m=22$.}\label{22digraph}
\end{figure}
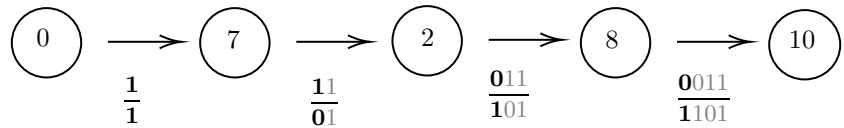
\end{center}
\begin{itemize}
    \item Step $0$: As always, we start with a carry value of 0, which is congruent to $0\bmod 3$, and we choose to add $m=22$ to the carry value, record $1\cdot3^0$ into $q$ and into $p$, subtract 1 from the carry value accordingly, and move to Step $1$.  The remaining unrecorded value is $(22-1)\cdot3^0=21\cdot3^0=7\cdot3^1$, which yields a carry value of $7$.  Note that in Figure \ref{22digraph}, Step 0 corresponds to the edge from the vertex 0 to the vertex 7, which is the new carry value, and the label on the edge denotes the $3^0$ digits of the base 3 expansions of $p$ (in the numerator) and $q$ (in the denominator).
    \item Step $1$: The carry value is $7\equiv1\pmod3$.  We record $0\cdot3^1$ into $q$ and $1\cdot3^1$ into $p$, and we subtract 1 from the carry value accordingly.  The remaining unrecorded value is now $(7-1)\cdot3^1=6\cdot3^1=2\cdot3^2$, which yields a carry value of $2$.  In Figure \ref{22digraph}, Step 1 corresponds to the edge from the vertex 7 to the vertex 2, the new carry value, and the new recorded digits in the edge label, $p$ in the numerator and $q$ in the denominator, are the $3^1$ digits.
    \item Step $2$: The carry value is $2\equiv2\bmod3$.  We add $m=22$ to the carry value, record $1\cdot3^2$ into $q$, and record $0\cdot3^2$ into $p$.  The remaining unrecorded value at this step is $(2+22)\cdot3^2=24\cdot3^2=8\cdot3^3$, which yields a carry value of $8$.  Step 2 corresponds to the edge from the vertex 2 to the vertex 8 with the $3^2$ digits of $p$ and $q$ recorded.
    \item Step $3$: The carry value is $8\equiv 2\pmod 3$, so we again add $m=22$ to the carry value, record $1\cdot3^3$ into $q$, and record $0\cdot3^3$ into $p$. The remaining unrecorded value is $(8+22)\cdot3^3=30\cdot3^3=10\cdot3^4$, which yields a carry value of $10$.  Step 3 corresponds to the edge from the vertex 8 to the vertex 10 with the $3^3$ digits of $p$ and $q$ recorded.
    \item Step $4$: The carry value is $10=3^2+3^0$, which is a sum of distinct powers of 3, so the algorithm terminates.
\end{itemize}
At this point, we append the base 3 representation of $10$, $[101]_3$, to the left of the digits for $p$, and we arrive at the final quotient $22=\frac{[1010011]_3}{[1101]_3}$.
\end{example}

In the example $m=22$, for vertices congruent to $0\bmod 3$, we make the first choice in the description of the algorithm (adding $m$) only in the first step, as the second option would loop back to the vertex 0 and keep us from moving toward our goal of obtaining a representation of $m$. If we continued beyond the carry value of 10 to draw edges for all possible paths originating at the vertex 0, we would have the extended graph in Figure \ref{subgraphoftransducer}.  Such an extended graph shows all representations of an integer $m$.  The particular representation of $m=22$ constructed above by stopping after the carry value of 10 is the same representation we obtain from following the path $0\rightarrow7\rightarrow2\rightarrow8\rightarrow10\rightarrow3\rightarrow1\rightarrow0$ in Figure \ref{subgraphoftransducer}, since the subpath $10\rightarrow3\rightarrow1\rightarrow0$ appends the same digits to $p$ as the digits in the base 3 representation of 10. We see this by observing that the subpath $10\rightarrow3\rightarrow1\rightarrow0$ contributes no digits to $q$ while contributing to $p$ the digits 101.

\begin{figure}[h!]
\tikzset{every picture/.style={line width=0.75pt}} 
\begin{tikzpicture}[x=0.75pt,y=0.75pt,yscale=-1,xscale=1]

\draw   (54,81.5) .. controls (54,71.84) and (61.84,64) .. (71.5,64) .. controls (81.16,64) and (89,71.84) .. (89,81.5) .. controls (89,91.16) and (81.16,99) .. (71.5,99) .. controls (61.84,99) and (54,91.16) .. (54,81.5) -- cycle ;
\draw    (102.5,81) -- (137.5,81) ;
\draw [shift={(139.5,81)}, rotate = 180] [color={rgb, 255:red, 0; green, 0; blue, 0 }  ][line width=0.75]    (10.93,-3.29) .. controls (6.95,-1.4) and (3.31,-0.3) .. (0,0) .. controls (3.31,0.3) and (6.95,1.4) .. (10.93,3.29)   ;
\draw   (149,81.5) .. controls (149,71.84) and (156.84,64) .. (166.5,64) .. controls (176.16,64) and (184,71.84) .. (184,81.5) .. controls (184,91.16) and (176.16,99) .. (166.5,99) .. controls (156.84,99) and (149,91.16) .. (149,81.5) -- cycle ;
\draw    (197.5,81) -- (232.5,81) ;
\draw [shift={(234.5,81)}, rotate = 180] [color={rgb, 255:red, 0; green, 0; blue, 0 }  ][line width=0.75]    (10.93,-3.29) .. controls (6.95,-1.4) and (3.31,-0.3) .. (0,0) .. controls (3.31,0.3) and (6.95,1.4) .. (10.93,3.29)   ;
\draw   (246,80.5) .. controls (246,70.84) and (253.84,63) .. (263.5,63) .. controls (273.16,63) and (281,70.84) .. (281,80.5) .. controls (281,90.16) and (273.16,98) .. (263.5,98) .. controls (253.84,98) and (246,90.16) .. (246,80.5) -- cycle ;
\draw    (294.5,80) -- (329.5,80) ;
\draw [shift={(331.5,80)}, rotate = 180] [color={rgb, 255:red, 0; green, 0; blue, 0 }  ][line width=0.75]    (10.93,-3.29) .. controls (6.95,-1.4) and (3.31,-0.3) .. (0,0) .. controls (3.31,0.3) and (6.95,1.4) .. (10.93,3.29)   ;
\draw   (341,81.5) .. controls (341,71.84) and (348.84,64) .. (358.5,64) .. controls (368.16,64) and (376,71.84) .. (376,81.5) .. controls (376,91.16) and (368.16,99) .. (358.5,99) .. controls (348.84,99) and (341,91.16) .. (341,81.5) -- cycle ;
\draw    (389.5,81) -- (424.5,81) ;
\draw [shift={(426.5,81)}, rotate = 180] [color={rgb, 255:red, 0; green, 0; blue, 0 }  ][line width=0.75]    (10.93,-3.29) .. controls (6.95,-1.4) and (3.31,-0.3) .. (0,0) .. controls (3.31,0.3) and (6.95,1.4) .. (10.93,3.29)   ;
\draw   (436,82.5) .. controls (436,72.84) and (444.02,65) .. (453.91,65) .. controls (463.8,65) and (471.82,72.84) .. (471.82,82.5) .. controls (471.82,92.16) and (463.8,100) .. (453.91,100) .. controls (444.02,100) and (436,92.16) .. (436,82.5) -- cycle ;
\draw    (486.5,81) -- (521.5,81) ;
\draw [shift={(523.5,81)}, rotate = 180] [color={rgb, 255:red, 0; green, 0; blue, 0 }  ][line width=0.75]    (10.93,-3.29) .. controls (6.95,-1.4) and (3.31,-0.3) .. (0,0) .. controls (3.31,0.3) and (6.95,1.4) .. (10.93,3.29)   ;
\draw   (533,82.5) .. controls (533,72.84) and (541.02,65) .. (550.91,65) .. controls (560.8,65) and (568.82,72.84) .. (568.82,82.5) .. controls (568.82,92.16) and (560.8,100) .. (550.91,100) .. controls (541.02,100) and (533,92.16) .. (533,82.5) -- cycle ;
\draw   (630,82.5) .. controls (630,72.84) and (638.02,65) .. (647.91,65) .. controls (657.8,65) and (665.82,72.84) .. (665.82,82.5) .. controls (665.82,92.16) and (657.8,100) .. (647.91,100) .. controls (638.02,100) and (630,92.16) .. (630,82.5) -- cycle ;
\draw    (583.5,83) -- (618.5,83) ;
\draw [shift={(620.5,83)}, rotate = 180] [color={rgb, 255:red, 0; green, 0; blue, 0 }  ][line width=0.75]    (10.93,-3.29) .. controls (6.95,-1.4) and (3.31,-0.3) .. (0,0) .. controls (3.31,0.3) and (6.95,1.4) .. (10.93,3.29)   ;
\draw    (550.5,56) .. controls (497.04,12.44) and (391.63,20.83) .. (364.3,55.93) ;
\draw [shift={(363.5,57)}, rotate = 305.84] [color={rgb, 255:red, 0; green, 0; blue, 0 }  ][line width=0.75]    (10.93,-3.29) .. controls (6.95,-1.4) and (3.31,-0.3) .. (0,0) .. controls (3.31,0.3) and (6.95,1.4) .. (10.93,3.29)   ;
\draw    (646.5,106) .. controls (603.71,178.64) and (146.11,129.5) .. (76.52,108.32) ;
\draw [shift={(75.5,108)}, rotate = 17.65] [color={rgb, 255:red, 0; green, 0; blue, 0 }  ][line width=0.75]    (10.93,-3.29) .. controls (6.95,-1.4) and (3.31,-0.3) .. (0,0) .. controls (3.31,0.3) and (6.95,1.4) .. (10.93,3.29)   ;
\draw    (77.5,56) .. controls (83.41,22.51) and (63.12,22.01) .. (63.47,55.45) ;
\draw [shift={(63.5,57)}, rotate = 268.36] [color={rgb, 255:red, 0; green, 0; blue, 0 }  ][line width=0.75]    (10.93,-3.29) .. controls (6.95,-1.4) and (3.31,-0.3) .. (0,0) .. controls (3.31,0.3) and (6.95,1.4) .. (10.93,3.29)   ;

\draw (65,73) node [anchor=north west][inner sep=0.75pt]   [align=left] {0};
\draw (161,74) node [anchor=north west][inner sep=0.75pt]   [align=left] {7};
\draw (259,73) node [anchor=north west][inner sep=0.75pt]   [align=left] {2};
\draw (352,74) node [anchor=north west][inner sep=0.75pt]   [align=left] {8};
\draw (444.41,74) node [anchor=north west][inner sep=0.75pt]   [align=left] {10};
\draw (546.41,74) node [anchor=north west][inner sep=0.75pt]   [align=left] {3};
\draw (641.41,73) node [anchor=north west][inner sep=0.75pt]   [align=left] {1};

\end{tikzpicture}
\caption{The subgraph of the multiplication transducer for $m=22$ in base 3 that encodes all representations of $22$ as a quotient of sums of distinct powers of 3.}\label{subgraphoftransducer}
\end{figure}
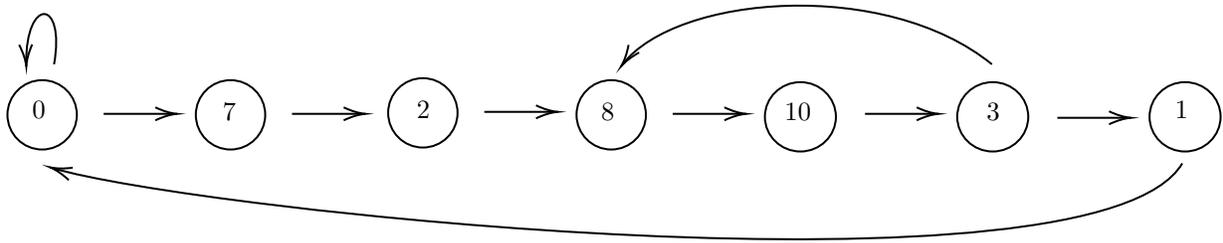

In general, the algorithm serves two main purposes for this paper.  First, if the algorithm terminates for a given integer $m\equiv1\pmod{3}$, as in Step 4 of Example \ref{22 example}, then the algorithm proves that $m$ is representable.  Second, if all possible paths back to $0$ are included in the graph produced by the algorithm for a given $m$, then the algorithm provides an explicit construction of all representations of $m$.  The directed graph involves two branches corresponding to the two choices in the algorithm.   Next we will discuss the additional information about an integer $m$ and its representations that can be gleaned from an extended graph.

The graph in Figure \ref{22digraph} is a subgraph of  the graph in Figure \ref{subgraphoftransducer}, while the graph in Figure \ref{subgraphoftransducer} can be generated as a subgraph of a multiplication transducer. A multiplication transducer, as described in \cite{BDT}, is a finite state automaton which performs the operation of multiplication by $m$ in base $b$ for any given positive integers $m$ and $b$. The multiplication transducer that performs multiplication by $m$ in base 3 can multiply $m\cdot q = p$ for any positive integer $q$ as follows: by reading the digits of the base 3 representation of $q$ from right to left, following the directed edges of the graph, and recording the digits of $p$ after each step.  This process continues until all digits have been read and the value carried over from the previous step is 0. As noted, the graph in Figure \ref{subgraphoftransducer} is a subgraph of the multiplication transducer that multiplies by $m=22$ in base 3, and this subgraph can be generated from the transducer as follows: by restricting edges to those that generate $q, p \in \mathcal{A}$ and subsequently restricting vertices to the carry values that are possible steps in the computation $m\cdot q=22\cdot q = p$ for $q, p \in \mathcal{A}$.

When exploring the representations of an integer $m$ through an expanded graph that includes the path back to the vertex 0, such as Figure \ref{subgraphoftransducer},  we can generate all possible representations of $m$ as a quotient of sums of distinct powers of 3. Each representation corresponds to a specific closed walk through the vertices of the digraph, starting and ending at the vertex 0. We note that a closed walk must include at least one step away from the vertex 0 in order to correspond to a valid representation, and by restricting our attention to elements of $\mathcal{A}$, we ensure that the initial step away from the vertex 0 will never be from the vertex 0 to itself. In the example above, we outlined the representation $22 =\frac{[1010011]_3}{[1101]_3} =\frac{814}{37}$, but taking a different closed walk, stepping up from the vertex 3 to the vertex 8 before returning to the vertex 0 in Figure \ref{subgraphoftransducer}, would give the representation $22 = \frac{[1010110011]_3}{[1101101]_3} =\frac{22198}{1009}$. 

\begin{definition}\label{decomposable and indecomposable}
An \textit{indecomposable walk} is a closed walk through the graph that contains the vertex 0 as the initial point and terminal point but not as an interior point, and an \textit{indecomposable representation} is a representation corresponding to such a walk.  A \textit{decomposable walk} is a closed walk through the graph that contains the vertex 0 not only as the initial point and terminal point but also as an interior point, and a \textit{decomposable representation} is a representation corresponding to a decomposable walk.  
\end{definition}

In the example of $m=22$, the representations given in the paragraph above Definition \ref{decomposable and indecomposable} are indecomposable representations, while $22 = \frac{[1010011\underline{1010011}]_3}{[1101\underline{0001101}]_3}$ is an example of a decomposable representation.  In general, a decomposable representation can be thought of as the concatenation of multiple indecomposable representations written in base 3. Before concatenating, we append copies of the digit 0 to the denominator of each indecomposable representation to make the lengths of the numerator and denominator match. To create the decomposable representation we gave for $m = 22$, we concatenated two copies of $\frac{[1010011]_3}{[1101]_3}$, appending three digits of 0 as placeholders in the denominator of the first (underlined) copy. This can also be thought of in base 10 as creating the representation $22 = \frac{3^7 \cdot 814 + 814}{3^7\cdot 37+ 37}$. In other words, a decomposable representation can be written as $\frac{3^kp_1+p_2}{3^kq_1 + q_2}$ for two representations $m = \frac{p_1}{q_1}$ and $m = \frac{p_2}{q_2}$, where $k$ is at least the length of the base 3 representation of $p_2$.


\section{Integers with only universal representations}\label{special families}

In our exploration of representations of integers as quotients of sums of distinct powers of 3, we have encountered many integers $m\equiv1\pmod{3}$ which have only universal representations.  We devote this section to such integers.  We begin in Section \ref{NewRepsSection} by describing how to build new universal representations from a given universal representation. In Section \ref{SpecialFamiliesSection}, we prove that four infinite families of integers have only universal representations.  Finally, we investigate the representations of integers of the form $m=3^n-2$ in Section \ref{3n-2Section} and of $m=100$ in Section \ref{RepsOf100Section}.

\subsection{Tools for Building New Representations}\label{NewRepsSection}

Given a representation of an integer $m$ as a quotient of sums of distinct powers of 3, there are several ways we can generate new representations.  Some aspects of this study can be handled quickly, using only properties of the set $\mathcal P$.  Before proceeding, it will be helpful to adopt an alternative description of Newman polynomials and to establish two lemmas. Write
\[
p(x) = 1 + \sum_{k=1}^{r-1} \epsilon_k x^k + x^r = \sum_{j=0}^m x^{a_j},
\]
where $0 = a_0 < a_1 < \cdots < a_m = r$, and let $\Delta(p): = \{a_i - a_j: 0 \le j < i \le m\}$.  The following lemma identifies exactly when a product of Newman polynomials is itself a Newman polynomial.

\begin{lemma}\label{L:51}
Suppose $s(x), t(x) \in \mathcal P$. Then $(st)(x) \in \mathcal P$  if and only if $\Delta(s) \cap \Delta(t) = \emptyset$.
\end{lemma}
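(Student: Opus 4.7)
The plan is to translate the claim into the language of the exponent sets and then exploit a clean bijection between coefficient collisions in the product and common differences.

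First I would write $s(x) = \sum_{i=0}^{m} x^{a_i}$ and $t(x) = \sum_{j=0}^{n} x^{b_j}$ with $0 = a_0 < a_1 < \cdots < a_m$ and $0 = b_0 < b_1 < \cdots < b_n$, as in the paragraph preceding the lemma. Expanding gives
\[
(st)(x) = \sum_{i=0}^{m}\sum_{j=0}^{n} x^{a_i + b_j},
\]
so the coefficient of $x^k$ in $st$ is the number of pairs $(i,j)$ with $a_i + b_j = k$. Because $s(0) = t(0) = 1$, the constant term of $st$ is $1$; because both leading coefficients are $1$ and the top exponents $a_m, b_n$ occur uniquely, the coefficient of $x^{a_m + b_n}$ is also $1$. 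Hence $(st)(x) \in \mathcal P$ if and only if all the exponents $a_i + b_j$ are distinct, i.e., there is no ``collision'' $a_i + b_j = a_{i'} + b_{j'}$ with $(i,j) \ne (i',j')$.

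Next I would show that the existence of such a collision is equivalent to $\Delta(s) \cap \Delta(t) \ne \emptyset$. For the forward direction, suppose $a_i + b_j = a_{i'} + b_{j'}$ with $(i,j) \ne (i',j')$. Since all the $a_i$ are distinct and all the $b_j$ are distinct, neither $i = i'$ nor $j = j'$ can occur alone, so both inequalities are strict; we may assume $i > i'$, which forces $j' > j$. Then
\[
a_i - a_{i'} = b_{j'} - b_j,
\]
and this common positive value lies in $\Delta(s) \cap \Delta(t)$. For the converse, if $d \in \Delta(s) \cap \Delta(t)$, then $d = a_i - a_{i'} = b_{j'} - b_j$ for some $i > i'$ and $j' > j$, and rearranging yields $a_i + b_j = a_{i'} + b_{j'}$, a collision with $(i,j) \ne (i',j')$.

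Combining these two equivalences proves the lemma. I do not expect any serious obstacle; the only subtle point worth flagging explicitly is the need to rule out the degenerate ``collision'' $(i,j) = (i',j')$ and to confirm that the leading and constant terms of $st$ are automatically $1$, so that verification of membership in $\mathcal P$ reduces purely to the absence of collisions among the middle exponents.
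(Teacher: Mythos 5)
Your proof is correct and takes essentially the same route as the paper: expand the product, observe that the coefficient of $x^k$ counts the number of pairs $(i,j)$ with $a_i + b_j = k$, and translate the absence of coefficient collisions into the disjointness of the difference sets. You are a bit more explicit than the paper in checking that the constant and leading terms of $st$ are automatically $1$ and in ruling out the degenerate collision $(i,j)=(i',j')$, but these are just careful elaborations of the same argument.
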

\begin{proof}
If $s(x) = \sum\limits_{k=0}^m x^{a_k}$ and $t(x) = \sum\limits_{\ell=0}^n x^{b_\ell}$, then $(st)(x) =  \sum\limits_{k=0}^m \sum\limits_{\ell=0}^n x^{a_k+b_\ell}$.
The coefficient of $x^j$ in $(st)(x)$ is the number of times that $a_k + b_\ell =
j$, which is always non-negative and is in $\{0,1\}$ if and only if $a_k + b_{\ell} = a_{k'} + b_{\ell'}$ implies $k = k'$ and $\ell = \ell'$.  This happens if and only if
$a_k -  a_{k'} = b_{\ell'} -  b_{\ell}$ is equivalent to  $k = k'$ and $\ell = \ell' $, which happens if and only if  $\Delta(s) \cap \Delta(t) = \emptyset$.
\end{proof}

We remark here that one simple way to avoid common exponent gaps between two Newman polynomials $s(x),t(x)$ is to require that all exponent gaps in $s(x)$ are greater than the degree of $t(x)$, or vice versa.  This is a recurring idea in this paper and will be referenced several times in the remainder of this subsection as well as in the proof of Theorem \ref{EveryRepOf100Universal}.

We now present a different version of Lemma \ref{L:51} which can be used to view exponent gaps within Newman polynomials more generally from the perspective of universal representations.  This lemma can be proved by a direct computation, which is omitted.
\begin{lemma}\label{ForTheorem2.5}
Suppose $p_j(x), q_j(x) \in \mathcal P$ for all $1 \le j \le w$ and 
$p(x) =  \sum_j x^{n_j}p_j(x), q(x) =  \sum_j x^{n_j}q_j(x) \in \mathcal P$, with
no other conditions on the non-negative integers $n_j$.
Suppose further that $\frac{p_j(x)}{q_j(x)} = t(x)$ for all
$j$. Then $\frac{p(x)}{q(x)} = t(x)$ as well. Similarly, if 
$\frac{p_j(3)}{q_j(3)} = t(3)=  m$ for all $j$, then $\frac{p(3)}{q(3)} = m$.
\end{lemma}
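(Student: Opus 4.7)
The plan is to reduce the lemma to a straightforward polynomial identity. The hypothesis $\frac{p_j(x)}{q_j(x)} = t(x)$ for all $j$ is equivalent, in the field of rational functions $\mathbb{Q}(x)$, to the $w$ identities $p_j(x) = t(x)\,q_j(x)$ for $j = 1, \ldots, w$. Once these are in hand, the result follows from a one-line computation that factors $t(x)$ out of the weighted sum defining $p(x)$.

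Concretely, I would substitute these identities into the definition of $p(x)$ and pull the common factor $t(x)$ out of the sum:
\[
p(x) = \sum_j x^{n_j} p_j(x) = \sum_j x^{n_j} t(x)\,q_j(x) = t(x) \sum_j x^{n_j} q_j(x) = t(x)\, q(x).
\]
Since $q(x) \in \mathcal{P}$ is nonzero, dividing both sides by $q(x)$ yields $\frac{p(x)}{q(x)} = t(x)$, as claimed. For the second assertion, I would specialize the polynomial identity $p(x) = t(x)\,q(x)$ to $x = 3$, obtaining $p(3) = t(3)\,q(3) = m\cdot q(3)$; since $q(3)\neq 0$ (as $q \in \mathcal{P}$ has constant term $1$), dividing gives $\frac{p(3)}{q(3)} = m$.

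There really is no obstacle to overcome here; the entire content of the argument is the distributive law together with the assumption that the $p_j/q_j$ share a common value $t(x)$. The one subtlety worth flagging is that the condition $p(x), q(x) \in \mathcal{P}$ is imposed as part of the hypothesis rather than derived, and the lemma places no constraints at all on the non-negative integers $n_j$. Whether a given weighted sum $\sum_j x^{n_j} p_j(x)$ of Newman polynomials actually lands in $\mathcal{P}$ is a genuinely combinatorial question governed by something like Lemma \ref{L:51} (disjoint exponent-gap sets), but the present lemma is designed to sidestep that question entirely so that it can be applied modularly whenever the Newman condition has been verified by other means.
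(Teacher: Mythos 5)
Your computation for the first assertion is exactly the ``direct computation'' the paper alludes to and omits: from $p_j(x) = t(x)\,q_j(x)$ for each $j$, you distribute, pull $t(x)$ out, and obtain $p(x) = t(x)\,q(x)$; since $q(x)$ is a nonzero polynomial (it has constant term $1$), dividing finishes it. That part is fine and matches the intent of the paper.

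There is, however, a genuine slip in your treatment of the second assertion. You write that you would ``specialize the polynomial identity $p(x) = t(x)\,q(x)$ to $x = 3$,'' but the second half of the lemma is stated under a strictly weaker hypothesis: one is only given the numerical equalities $\frac{p_j(3)}{q_j(3)} = m$, i.e., $p_j(3) = m\,q_j(3)$, and not the polynomial identity $p_j(x) = t(x)\,q_j(x)$. You therefore have no polynomial identity $p(x) = t(x)\,q(x)$ available to specialize. The fix is immediate and is the exact numerical analog of what you did in the first part: compute
\[
p(3) = \sum_j 3^{n_j} p_j(3) = \sum_j 3^{n_j}\, m\, q_j(3) = m \sum_j 3^{n_j} q_j(3) = m\, q(3),
\]
and divide by $q(3) \ge 1$. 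This distinction is not purely cosmetic: the paper states the second assertion separately precisely so the lemma can be invoked for quotients that are only known to agree at $x = 3$, where the cancellation $p_j(x) = t(x)\,q_j(x)$ need not hold as polynomials. Your closing remark about the role of the hypothesis $p(x), q(x) \in \mathcal{P}$ (imposed, not derived, with the Newman condition to be verified by Lemma \ref{L:51} or otherwise) is accurate and worth keeping.
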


Note that if we impose the additional condition that $n_j>\deg\left(\sum_{i<j}x^{n_i}p_i(x)\right)$ for each $j$, then we obtain a special case of Lemma \ref{ForTheorem2.5} that will be applied in the proof of Theorem \ref{EveryRepOf100Universal} to prove the universality of some representations of $100$.

Equipped with these lemmas, we now discuss two ways to build new representations from a given representation.  First, we can generate additional representations of the same integer $m$.  Every integer having one representation in fact has
infinitely many representations. Suppose $m =\frac {p(3)}{q(3)}$, where $p(x), q(x)\in\mathcal{P}$ and $\deg p(x), \deg q(x) \le T$. If $f(x)$ is any polynomial in $\mathcal{A}$ for which the gaps in exponents 
are greater than $T$, then the exponent gaps in $f(x)$ ensure that there are no possible cross-terms in the products $(fp)(x)$ and $(fq)(x)$.  By Lemma \ref{L:51}, we have that $(fp)(x), (fq)(x) \in \mathcal{A}$, and so $m = \frac {(fp)(3)}{(fq)(3)}$, giving a new representation of the same integer $m$, and this new representation is decomposable.

Secondly, we can work with a given representation of $m$ to obtain representations of other integers.  Suppose $m = \frac{p(3)}{q(3)}\in\mathcal{A}/\mathcal{A}$ and $f(x) \in \mathcal{P}$ is any polynomial for which $\Delta(f) \cap \Delta(p) = \emptyset$. Then $(fp)(x) \in \mathcal{P}$ by Lemma \ref{L:51}, so, multiplying $m$ by $f(x)$ and evaluating at $x=3$, we have that $f(3)\cdot m = \frac {(fp)(3)}{q(3)}$ is in $\mathcal{A}/\mathcal{A}$ and is a representation not of $m$ but of $f(3)\cdot m$.  A particular type of polynomial $f(x)$ that will yield such a representation of a new integer is a polynomial $f(x)$ with exponent gaps larger than the degree of $p(x)$.  More explicitly, if $\deg p(x) = r$ and $f(x)\in\mathcal{P}$ such that $\min\Delta(f)>r$, then $(fp)(x) \in \mathcal P$ and thus $f(3)\cdot m\in\mathcal{A}/\mathcal{A}$.

\subsection{Special families}\label{SpecialFamiliesSection}  

This section is dedicated to proving that several infinite families of integers $m\equiv1\pmod{3}$ have only universal representations and, moreover, that those universal representations must have a specific form.  There are certain values of $m$ for which the representation $m = \frac{p(3)}{q(3)}\in\mathcal{A}/\mathcal{A}$ gives very strong conditions on $p(x)$ and $q(x)$, and 
the following lemma will be useful as we prove these conditions.

\begin{lemma}\label{L:uni1}
Let $\mathcal{C},\mathcal{D}$ be sets of integers, and suppose one of the following two cases holds:
\begin{itemize}
    \item Case 1: $\mathcal C = \mathcal D = \{n,n+1,n+2\}$, where $n\in\mathbb{Z}$;
    \item Case 2: $\mathcal{C} = \{0,1\}\ \text{and} \ \mathcal D = \{-1,0,1,2\}$.
\end{itemize}
Let $a(x) = \sum_i a_i x^i$ and $b(x) = \sum_i b_i x^i$, where $a_i \in \mathcal C$ and $b_i \in \mathcal D$.  If $a(3) = b(3)$, then $a_i = b_i$ for all $i$, so $a(x) = b(x)$.
\end{lemma}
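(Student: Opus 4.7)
The plan is to unify both cases by passing to the difference polynomial. Let $c_i = a_i - b_i$, so the hypothesis $a(3) = b(3)$ becomes $\sum_i c_i\, 3^i = 0$, and the conclusion $a_i = b_i$ for all $i$ becomes $c_i = 0$ for all $i$. In Case 1, where $\mathcal{C} = \mathcal{D} = \{n,n+1,n+2\}$, every $c_i$ lies in $\{-2,-1,0,1,2\}$. In Case 2, I would verify by a short enumeration of the eight pairs that $\{0,1\} - \{-1,0,1,2\} \subseteq \{-2,-1,0,1,2\}$ as well. Thus both cases reduce to the single claim:

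If $c_i \in \{-2,-1,0,1,2\}$ for every $i$ and $\sum_i c_i\, 3^i = 0$, then $c_i = 0$ for every $i$.

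Next I would prove this reduced claim by a smallest-nonzero-index argument. Suppose for contradiction that not every $c_i$ is zero, and let $k$ be the least index with $c_k \neq 0$. Then
\[
0 = \sum_{i \ge k} c_i\, 3^i = 3^k\Bigl(c_k + 3\!\sum_{i > k} c_i\, 3^{i-k-1}\Bigr),
\]
so $c_k$ must be divisible by $3$. Since $c_k \in \{-2,-1,1,2\}$, none of which is divisible by $3$, this is a contradiction, and the claim follows.

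I do not anticipate any serious obstacle: the key ingredient is simply the uniqueness of the base-$3$ expansion of $0$ when digits are allowed to range over $\{-2,-1,0,1,2\}$, and the only bookkeeping is to verify the two containments $\mathcal{C}-\mathcal{D} \subseteq \{-2,-1,0,1,2\}$ in each of the listed cases. The argument tacitly allows the polynomials $a(x)$ and $b(x)$ to have different degrees by padding with $c_i = 0$, which is automatically consistent with the digit set in both cases.
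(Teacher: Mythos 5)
Your argument is correct. The essential content is the same as the paper's: in both, the key observation is that the lowest-degree place where the two polynomials disagree must be congruent $\bmod\ 3$, yet the digit constraints rule out a nonzero residue. The paper phrases this as an induction on degree, handling the two cases separately by noting that $a_0 \equiv b_0 \pmod 3$ forces $a_0 = b_0$ (for Case~1 because $\{n,n+1,n+2\}$ is a complete residue system; for Case~2 by first ruling out $b_0 \in \{-1,2\}$). You instead pass to the difference polynomial $c(x) = a(x) - b(x)$, verify that in either case $c_i \in \{-2,\dots,2\}$, and run a single minimal-counterexample argument. Your unification genuinely merges the two cases into one claim about a symmetric digit set, which is cleaner and makes the structure of the argument more visible; what it costs you is that you had to check the arithmetic of $\mathcal{C} - \mathcal{D}$ by hand, while the paper's per-case reasoning keeps the original digit sets in view. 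One small caution in your closing remark: base-$3$ representation with digits in $\{-2,\dots,2\}$ is \emph{not} unique in general (e.g.\ $1 = 1 = 3 - 2$), so what you are actually using is only the narrower fact that $0$ has no nontrivial representation, which is exactly what your divisibility argument establishes.
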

\begin{proof}
We prove the result by induction on $\operatorname{max}\{\deg a(x), \deg b(x)\}$; the assertion is clear when this
maximum is 0. Since $a(3)=b(3)$, we know $\sum_i a_i 3^i = \sum_i b_i 3^i$, and this gives $a_0 \equiv b_0 \pmod 3$.
In the first case, this means that $a_0 = b_0$; we may write
$a(x) = a_0 + x\tilde a(x)$ and $b(x) = b_0 + x\tilde b(x)$ and apply the inductive
argument to $\tilde a, \tilde b$. In the second case, we first note that $b_0 \notin \{-1,2\}$ and, 
again, $a_0=b_0$ and repeat the argument.
\end{proof}

These results allow us to make explicit statements about representations of certain special families of integers congruent to $1\bmod 3$. If $m$ belongs to one of the families described below in Theorem \ref{T:onlyuniversal}, then $m$ has only universal representations, and they must be of a particular kind.

\begin{theorem}\label{T:onlyuniversal} Let $p(x),q(x)\in\mathcal{P}$.
\begin{enumerate}
\item\label{all easy part one} If $3^r+1 = \frac{p(3)}{q(3)}$, then $p(x) = (x^r+1)q(x)$.
\item\label{all easy part two} If $3^{rk} + 3^{(r-1)k} + \dots + 3^k +1 = \frac{p(3)}{q(3)}$, then 
\[
p(x) = (x^{rk}+x^{(r-1)k} + \dots + x^k + 1)q(x) = \frac{x^{(r+1)k} - 1}{x^k -1} q(x).
\]
\item\label{all easy part three} If $3^{2rk} - 3^{(2r-1)k} + \dots - 3^k +1 = \frac{p(3)}{q(3)}$, then 
\[
p(x) = (x^{2rk}-x^{(2r-1)k} + \dots - x^k + 1)q(x)  = \frac{x^{(2r+1)k} + 1}{x^k +1} q(x).
\]
\item\label{all easy part four} If $r > s > 0$ and  $3^r - 3^s + 1 = \frac{p(3)}{q(3)}$, 
then 
\[
p(x) = (x^r-x^s+1)q(x).
\]
\end{enumerate}
In each of these four cases, there exist $p(x), q(x) \in \mathcal P$ which satisfy the particular condition. 
\end{theorem}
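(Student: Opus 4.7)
The plan is to derive polynomial identities from the given numerical identities by means of Lemma~\ref{L:uni1}. In each case, let $t(x)$ denote the polynomial whose form is asserted in the conclusion, so that $t(3)=m$ and the hypothesis $m=\frac{p(3)}{q(3)}$ becomes $p(3)=t(3)q(3)$. The goal is to upgrade this to the polynomial equality $p(x)=t(x)q(x)$, after which the claimed description of $p(x)$ is immediate.

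Cases (\ref{all easy part one}) and (\ref{all easy part four}) are handled directly. In case (\ref{all easy part one}), $t(x)q(x)=q(x)+x^{r}q(x)$ has coefficients in $\{0,1,2\}$, as does $p(x)\in\mathcal{P}$, so the first case of Lemma~\ref{L:uni1} with $n=0$ forces $p(x)=t(x)q(x)$. In case (\ref{all easy part four}), $t(x)q(x)=x^{r}q(x)-x^{s}q(x)+q(x)$ has coefficients in $\{-1,0,1,2\}$, and applying the second case of Lemma~\ref{L:uni1}, with $\mathcal{C}=\{0,1\}$ and $\mathcal{D}=\{-1,0,1,2\}$, yields the same conclusion.

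Cases (\ref{all easy part two}) and (\ref{all easy part three}) require a preprocessing step, since $t(x)q(x)$ can have coefficients exceeding $2$. The key is to exploit the factorizations $(x^{k}-1)t(x)=x^{(r+1)k}-1$ and $(x^{k}+1)t(x)=x^{(2r+1)k}+1$. Multiplying the numerical identity $p(3)=t(3)q(3)$ by $3^{k}-1$ in case (\ref{all easy part two}) shows that $(x^{k}-1)p(x)$ and $(x^{(r+1)k}-1)q(x)$ agree at $x=3$; each is a difference of two shifted $0,1$-polynomials and therefore has coefficients in $\{-1,0,1\}$, so the first case of Lemma~\ref{L:uni1} with $n=-1$ forces the polynomial identity, and cancelling $x^{k}-1$ in $\mathbb{Q}[x]$ gives $p(x)=t(x)q(x)$. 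Case (\ref{all easy part three}) is entirely analogous: multiplying by $3^{k}+1$ produces two polynomials with coefficients in $\{0,1,2\}$, Lemma~\ref{L:uni1} with $n=0$ applies, and one cancels $x^{k}+1$.

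To establish existence, in cases (\ref{all easy part one}) and (\ref{all easy part two}) the trivial choice $q(x)=1$ with $p(x)=t(x)\in\mathcal{P}$ works. In case (\ref{all easy part three}), take $q(x)=x^{k}+1$ and $p(x)=x^{(2r+1)k}+1$, both of which lie in $\mathcal{P}$ and have ratio $t(x)$ at $x=3$. Case (\ref{all easy part four}) is the only one requiring a small calculation: with $q(x)=1+x+\cdots+x^{r-1}$, a direct expansion gives
\[
(x^{r}-x^{s}+1)\cdot\frac{x^{r}-1}{x-1}=(1+x+\cdots+x^{s-1})+(x^{r+s}+x^{r+s+1}+\cdots+x^{2r-1}),
\]
and the two blocks of exponents are disjoint because $s-1<r+s$, so this product lies in $\mathcal{P}$. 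The main obstacle is spotting the annihilator trick in cases (\ref{all easy part two}) and (\ref{all easy part three}); once one multiplies by $x^{k}\mp 1$, the coefficient bookkeeping required by Lemma~\ref{L:uni1} is entirely routine.
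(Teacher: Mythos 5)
Your proof is correct and follows the same strategy as the paper: all four cases reduce to Lemma~\ref{L:uni1} once one multiplies by $x^k\mp 1$ in cases (\ref{all easy part two}) and (\ref{all easy part three}) to bring the coefficient ranges within reach of the lemma, and the existence witnesses you supply (including $q(x)=x^k+1$ for (\ref{all easy part three}) and $q(x)=1+x+\cdots+x^{r-1}$ for (\ref{all easy part four})) are precisely those used by the authors.
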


\begin{proof}
The proofs of the various parts of this theorem are similar and begin by letting $p(x),q(x)\in\mathcal{P}.$

For Part \ref{all easy part one}, assume $3^r+1=\frac{p(3)}{q(3)}$, and consider the polynomials $p(x)$ and $(x^r+1)q(x)$. Observe that the coefficients of $p(x)$ are in $\{0,1\}$ and the coefficients of $(x^r+1)q(x)$
are in $\{0,1,2\}$. Since $p(3) = (3^r+1)q(3)$,  we know from Lemma \ref{L:uni1} that $p(x) = (x^r+1)q(x)$. A simple example with $p(x),q(x)\in\mathcal{P}$ comes from taking $q(x)=1$.

Similarly, in Part \ref{all easy part two}, assume $3^{rk} + 3^{(r-1)k} + \dots + 3^k +1 = \frac{p(3)}{q(3)}$, and consider the polynomials $(x^k-1)p(x)$ and $(x^{(r+1)k}-1)q(x)$. Since $p(x), q(x) \in \mathcal P$, it follows that both $(x^k-1)p(x)$ and $(x^{(r+1)k}-1)q(x)$ have coefficients in $\{-1,0,1\}$. We take their quotient at $x=3$ and use the hypothesis regarding $\frac{p(3)}{q(3)}$ to obtain
\[
\frac{(3^{(r+1)k}-1)q(3)}{(3^k-1)p(3)} = \frac{(3^{(r+1)k}-1)}{(3^k-1)} \left(\frac{p(3)}{q(3)}\right)^{-1} = 
\frac{3^{(r+1)k} - 1}{3^{(r+1)k} - 1} = 1.
\]
Thus $(3^k-1)p(3)=(3^{(r+1)k}-1)q(3)$, and it follows from Lemma \ref{L:uni1} that $(x^k-1)p(x)=(x^{(r+1)k}-1)q(x)$.  Again, a simple example with $p(x),q(x)\in\mathcal{P}$ comes from taking $q(x)=1$.

In Part \ref{all easy part three}, assume $3^{2rk} - 3^{(2r-1)k} + \dots - 3^k +1 = \frac{p(3)}{q(3)}$, and consider the polynomials $(x^k+1)p(x)$ and $(x^{(2r+1)k}+1)q(x)$. The coefficients of these products are in $\{0,1,2\}$. As in the proof of Part \ref{all easy part two}, we take the quotient of these two polynomials at $x=3$ and use the hypothesis regarding $\frac{p(3)}{q(3)}$ to obtain  
\[
(3^k+1)p(3)=(3^{(2r+1)k}+1)q(3).
\]
Again by Lemma \ref{L:uni1}, it follows that $(x^k+1)p(x)=(x^{(2r+1)k}+1)q(x)$.  As an example, take $q(x) = x^k+1$. Then $p(x) =  x^{(2r+1)k}+1$, and we can see that $p(x), q(x) \in \mathcal{P}$, and $\frac{p(x)}{q(x)} = x^{2rk} - x^{(2r-1)k} + \dots - x^k +1$.
 
Lastly, in Part 4, assume $3^r - 3^s + 1 = \frac{p(3)}{q(3)}$, and consider the polynomials $p(x)$ and $(x^r-x^s+1)q(x)$. By hypothesis
$p(x),q(x)\in \mathcal P$, so we can write $q(x)=\sum\limits_{j=0}^v  x^{b_j}$, and 
\[
(x^r-x^s+1)q(x) = (x^r+1)\left(\sum_{j=0}^v  x^{b_j}\right) - x^s\left(\sum_{j=0}^v  x^{b_j}\right).
\]
We see that $(x^r-x^s+1)q(x)$ is the difference of a polynomial with coefficients in $\{0,1,2\}$ and a polynomial with coefficients in $\{0,1\}$, so the coefficients of $(x^r-x^s+1)q(x)$ are in $\{-1,0,1,2\}$. Since $p(3) = (3^r-3^s+1)q(3)$, we can use Lemma \ref{L:uni1} one final time to conclude that $p(x)=(x^r-x^s+1)q(x)$.  Finally, an example comes from taking $q(x)=x^{r-1}+\cdots+x+1$, and then $p(x)=x^{2r-1}+\cdots +x^{r+s}+x^{s-1}+\cdots+x+1$.
\end{proof}

Among the integers congruent to $1 \bmod 3$ and less than or equal to $121$, the following
are covered by one or more of the parts of Theorem \ref{T:onlyuniversal} and thus have only universal representations: 4, 7, 10, 13, 19, 25,
28, 40, 55, 61,  73, 79, 82, 91, 121. The integers in intervals excluded by Theorem \ref{T:feasible} are
16, 43, 46, 49, 52. This leaves open 22, 31, 34, 37, 58, 64, 67, 70,
76, 85, 88, 94, 97, 103, 106, 109, 112, 115, 118. The first open two have quite different behavior; as we shall see in Section \ref{OnlyLocalSection}, 22 has only local representations, but we already know from Example \ref{31 example} that 31 can go either way!


\subsection{Integers of the form $3^n-2$}\label{3n-2Section}
Here we describe the representations of integers of the form $m=3^n-2$, where $n\geq3$.  First note that any integer of this form is covered by Theorem \ref{T:onlyuniversal} Part \ref{all easy part four}, since $3^n-2=3^n-3+1$.  Hence we know that all representations of such an integer must be universal.  We now describe all possible representations for $m=3^n-2$ and give the specific polynomials $p(x)$ and $q(x)$ such that $m=\frac{p(3)}{q(3)}$.

Suppose that $m=3^n-2=\frac{p(3)}{q(3)}$, where $p(x)=1+\sum\limits_{i=1}^{w-1}c_ix^i+x^w$ and $q(x)=1+\sum\limits_{i=1}^{s-1}d_ix^i+x^s$ with $c_i,d_i\in\{0,1\}$.  Let $g(x):=\frac{p(x)}{q(x)}\in\mathbb{Z}[x]$.  Then by Theorem \ref{T:feasible}, we know
\[
\frac{2}{3}\cdot 3^{w-s}<3^n-2<\frac{3}{2}\cdot3^{w-s},
\]
so $n=w-s=\deg g(x)$.

Since the constant terms and the coefficients of $x^w$ in $p(x)$ and $x^s$ in $q(x)$ are 1, we may then write $g(x)=1+\sum\limits_{i=1}^{n-1}e_ix^i+x^n$.  Equating coefficients of powers of $x$ in $p(x)=g(x)q(x)$ now gives the system of equations
$$c_i=\begin{cases}
d_i+e_i &\mbox{if }i=1,\\
d_i+\sum\limits_{j=1}^{i-1} d_{i-j}e_j+e_i &\mbox{if }1<i<n,\\
d_i+\sum\limits_{j=1}^{i-1}d_{i-j}e_j+1 &\mbox{if }i=n,\\
d_i+\sum\limits_{j=1}^{n-1}d_{i-j}e_j+d_{i-n}  &\mbox{if }n< i \leq w.
\end{cases}$$
We now use this system of equations to determine the coefficients $c_i, d_i$, and $e_i$ in the polynomials $p(x), q(x)$, and $g(x)$. We will approach this in sections: \begin{itemize}
    \item We first consider the coefficients corresponding to $i=1$ and then $2\leq i\leq n-1$.
    \item Next, we consider the coefficients corresponding to $i=n$. 
    \item After dealing with $i = n$, we reach a point where we can choose either to continue toward the end of the algorithm or to carry the value $\frac{m-1}{2}$ repeatedly before continuing. We consider the choice of carrying $\frac{m-1}{2}$ repeatedly $k$ times, for $n+1\leq i \leq n+k$.
    \item We then consider the remaining coefficients, when $n+k < i$.
    \end{itemize}

We begin by considering $g(3)$. Since
\begin{align*}
3^n-2=g(3)&=1+3e_1+9e_2+27e_3+\cdots+3^{n-1}e_{n-1}+3^n,\\
-3&=3e_1+9e_2+27e_3+\cdots+3^{n-1}e_{n-1},\\
-1&=e_1+3e_2+9e_3+\cdots+3^{n-2}e_{n-1},
\end{align*}
so $e_1\equiv 2\pmod 3$.  Since $e_1=c_1-d_1$, we see $c_1=0, d_1=1$, and $e_1=-1$.  Next observe that $3e_2\equiv 0\pmod 9$, so $e_2\equiv0\pmod 3$.  The second equation in our system is $c_2=d_2-1+e_2$.  Since $c_2,d_2\in\{0,1\}$, we must have $c_2=0, d_2=1$, and $e_2=0$.

Suppose that $2\leq i\leq n-1$ and that for all $j$ with $2\leq j<i$, we have $c_j=0, d_j=1$, and $e_j=0$.  Consider the $i$-th equation
\[
c_i=d_i+d_{i-1}e_1+d_{i-2}e_2+\cdots+d_2e_{i-2}+d_1e_{i-1}+e_i.
\]
This is 
\[
c_i=d_i+1(-1)+1(0)+\cdots+1(0)+1(0)+e_i,
\]
or, more simply, $c_i=d_i-1+e_i$.  We know $3^{i-1}e_i\equiv0\pmod {3^i}$, so $e_i\equiv0\pmod 3$. Then $c_i=0, d_i=1$, and $e_i=0$.  Thus we have $c_1=0, d_1=1$, and $e_1=-1$; and $c_2=\cdots=c_{n-1}=0, d_2=\cdots=d_{n-1}=1$, and $e_2=\cdots=e_{n-1}=0$.

Now consider the $n$-th equation
\[
c_n=d_n+d_{n-1}e_1+d_{n-2}e_2+\cdots+d_2e_{n-2}+d_1e_{n-1}+1.
\]
This is 
\[
c_n=d_n+1(-1)+0+\cdots+0+0+1, 
\]
so $c_n=d_n$.  We have $g(x)=1+\sum\limits_{i=1}^{n-1}e_ix^i+x^n=1-x+x^n$.  

As we complete the description of representations of $3^n-2$, we will refer to the digraph in Figure \ref{subgraphof3n_2}. Choosing $c_n=d_n=0$ uniquely determines a universal representation for $3^n-2$, and it is the same representation that comes from choosing to carry $\frac{m-1}{6}$ at the first possible opportunity after carrying $\frac{m-1}{2}$ (without ever taking the loop at the vertex $\frac{m-1}{2}$ in Figure \ref{subgraphof3n_2}).  It is straightforward to prove by induction that $\frac{m-1}{6}$ is a sum of distinct powers of 3 when $m=3^n-2$, and therefore choosing to carry $\frac{m-1}{6}$ without taking the loop leads to the immediate termination of the algorithm.

\begin{center}
\begin{figure}[h!]
\tikzset{every picture/.style={line width=0.75pt}} 

\begin{tikzpicture}[x=0.75pt,y=0.75pt,yscale=-.9,xscale=.9]

\draw   (21,96.12) .. controls (21,83.79) and (31.67,73.79) .. (44.84,73.79) .. controls (58,73.79) and (68.68,83.79) .. (68.68,96.12) .. controls (68.68,108.45) and (58,118.45) .. (44.84,118.45) .. controls (31.67,118.45) and (21,108.45) .. (21,96.12) -- cycle ;
\draw    (75,96.5) -- (99,96.5) ;
\draw [shift={(101,96.5)}, rotate = 180] [color={rgb, 255:red, 0; green, 0; blue, 0 }  ][line width=0.75]    (10.93,-3.29) .. controls (6.95,-1.4) and (3.31,-0.3) .. (0,0) .. controls (3.31,0.3) and (6.95,1.4) .. (10.93,3.29)   ;
\draw   (236.77,91.12) .. controls (236.77,78.79) and (247.45,68.79) .. (260.61,68.79) .. controls (273.78,68.79) and (284.45,78.79) .. (284.45,91.12) .. controls (284.45,103.45) and (273.78,113.45) .. (260.61,113.45) .. controls (247.45,113.45) and (236.77,103.45) .. (236.77,91.12) -- cycle ;
\draw    (289,93.5) -- (312.25,94.4) ;
\draw [shift={(314.24,94.48)}, rotate = 182.23] [color={rgb, 255:red, 0; green, 0; blue, 0 }  ][line width=0.75]    (10.93,-3.29) .. controls (6.95,-1.4) and (3.31,-0.3) .. (0,0) .. controls (3.31,0.3) and (6.95,1.4) .. (10.93,3.29)   ;
\draw   (323.91,87.84) .. controls (323.91,75.51) and (334.58,65.51) .. (347.75,65.51) .. controls (360.92,65.51) and (371.59,75.51) .. (371.59,87.84) .. controls (371.59,100.18) and (360.92,110.18) .. (347.75,110.18) .. controls (334.58,110.18) and (323.91,100.18) .. (323.91,87.84) -- cycle ;
\draw   (638.32,84.12) .. controls (638.32,71.79) and (649,61.79) .. (662.16,61.79) .. controls (675.33,61.79) and (686,71.79) .. (686,84.12) .. controls (686,96.45) and (675.33,106.45) .. (662.16,106.45) .. controls (649,106.45) and (638.32,96.45) .. (638.32,84.12) -- cycle ;
\draw    (359.33,54.03) .. controls (372.75,5) and (337.26,16.65) .. (342.05,54.82) ;
\draw [shift={(342.3,56.58)}, rotate = 261.19] [color={rgb, 255:red, 0; green, 0; blue, 0 }  ][line width=0.75]    (10.93,-3.29) .. controls (6.95,-1.4) and (3.31,-0.3) .. (0,0) .. controls (3.31,0.3) and (6.95,1.4) .. (10.93,3.29)   ;
\draw    (663,116.5) .. controls (554.57,178.72) and (145.67,156.96) .. (53.02,130.34) ;
\draw [shift={(51.65,129.94)}, rotate = 16.6] [color={rgb, 255:red, 0; green, 0; blue, 0 }  ][line width=0.75]    (10.93,-3.29) .. controls (6.95,-1.4) and (3.31,-0.3) .. (0,0) .. controls (3.31,0.3) and (6.95,1.4) .. (10.93,3.29)   ;
\draw    (54.37,63.58) .. controls (62.43,20.84) and (34.79,20.2) .. (35.26,62.87) ;
\draw [shift={(35.3,64.85)}, rotate = 268.25] [color={rgb, 255:red, 0; green, 0; blue, 0 }  ][line width=0.75]    (10.93,-3.29) .. controls (6.95,-1.4) and (3.31,-0.3) .. (0,0) .. controls (3.31,0.3) and (6.95,1.4) .. (10.93,3.29)   ;
\draw   (106,96.12) .. controls (106,83.79) and (116.67,73.79) .. (129.84,73.79) .. controls (143,73.79) and (153.68,83.79) .. (153.68,96.12) .. controls (153.68,108.45) and (143,118.45) .. (129.84,118.45) .. controls (116.67,118.45) and (106,108.45) .. (106,96.12) -- cycle ;
\draw    (163,96.5) -- (182,97.4) ;
\draw [shift={(184,97.5)}, rotate = 182.73] [color={rgb, 255:red, 0; green, 0; blue, 0 }  ][line width=0.75]    (10.93,-3.29) .. controls (6.95,-1.4) and (3.31,-0.3) .. (0,0) .. controls (3.31,0.3) and (6.95,1.4) .. (10.93,3.29)   ;
\draw    (207,96.5) -- (228,96.5) ;
\draw [shift={(230,96.5)}, rotate = 180] [color={rgb, 255:red, 0; green, 0; blue, 0 }  ][line width=0.75]    (10.93,-3.29) .. controls (6.95,-1.4) and (3.31,-0.3) .. (0,0) .. controls (3.31,0.3) and (6.95,1.4) .. (10.93,3.29)   ;
\draw    (377,90.5) -- (401,90.5) ;
\draw [shift={(403,90.5)}, rotate = 180] [color={rgb, 255:red, 0; green, 0; blue, 0 }  ][line width=0.75]    (10.93,-3.29) .. controls (6.95,-1.4) and (3.31,-0.3) .. (0,0) .. controls (3.31,0.3) and (6.95,1.4) .. (10.93,3.29)   ;
\draw   (542.77,86.12) .. controls (542.77,73.79) and (553.45,63.79) .. (566.61,63.79) .. controls (579.78,63.79) and (590.45,73.79) .. (590.45,86.12) .. controls (590.45,98.45) and (579.78,108.45) .. (566.61,108.45) .. controls (553.45,108.45) and (542.77,98.45) .. (542.77,86.12) -- cycle ;
\draw    (603,88.5) -- (626.25,89.4) ;
\draw [shift={(628.24,89.48)}, rotate = 182.23] [color={rgb, 255:red, 0; green, 0; blue, 0 }  ][line width=0.75]    (10.93,-3.29) .. controls (6.95,-1.4) and (3.31,-0.3) .. (0,0) .. controls (3.31,0.3) and (6.95,1.4) .. (10.93,3.29)   ;
\draw   (408,90.12) .. controls (408,77.79) and (418.67,67.79) .. (431.84,67.79) .. controls (445,67.79) and (455.68,77.79) .. (455.68,90.12) .. controls (455.68,102.45) and (445,112.45) .. (431.84,112.45) .. controls (418.67,112.45) and (408,102.45) .. (408,90.12) -- cycle ;
\draw    (465,90.5) -- (484,91.4) ;
\draw [shift={(486,91.5)}, rotate = 182.73] [color={rgb, 255:red, 0; green, 0; blue, 0 }  ][line width=0.75]    (10.93,-3.29) .. controls (6.95,-1.4) and (3.31,-0.3) .. (0,0) .. controls (3.31,0.3) and (6.95,1.4) .. (10.93,3.29)   ;
\draw    (513,91.5) -- (534,91.5) ;
\draw [shift={(536,91.5)}, rotate = 180] [color={rgb, 255:red, 0; green, 0; blue, 0 }  ][line width=0.75]    (10.93,-3.29) .. controls (6.95,-1.4) and (3.31,-0.3) .. (0,0) .. controls (3.31,0.3) and (6.95,1.4) .. (10.93,3.29)   ;

\draw (39.34,87.62) node [anchor=north west][inner sep=0.75pt]   [align=left] {0};
\draw (657.3,75.9) node [anchor=north west][inner sep=0.75pt]   [align=left] {1};
\draw (242.54,81.67) node [anchor=north west][inner sep=0.75pt]  [font=\tiny] [align=left] {$\displaystyle \frac{m-3}{2}$};
\draw (330.54,78.67) node [anchor=north west][inner sep=0.75pt]  [font=\tiny] [align=left] {$\displaystyle \frac{m-1}{2}$};
\draw (112.34,84.62) node [anchor=north west][inner sep=0.75pt]  [font=\tiny] [align=left] {$\displaystyle \frac{m-1}{3}$};
\draw (561.54,81.67) node [anchor=north west][inner sep=0.75pt]  [font=\small] [align=left] {4};
\draw (414.34,78.62) node [anchor=north west][inner sep=0.75pt]  [font=\tiny] [align=left] {$\displaystyle \frac{m-1}{6}$};
\draw (189,93.5) node [anchor=north west][inner sep=0.75pt]   [align=left] {...};
\draw (492,88.5) node [anchor=north west][inner sep=0.75pt]   [align=left] {...};

\end{tikzpicture}
\caption{The subgraph of the multiplication transducer for $m=3^n-2$ with $n\geq 3$ in base $3$ that encodes all representations of $m$ as a quotient of sums of distinct powers of 3.}\label{subgraphof3n_2}
\end{figure}
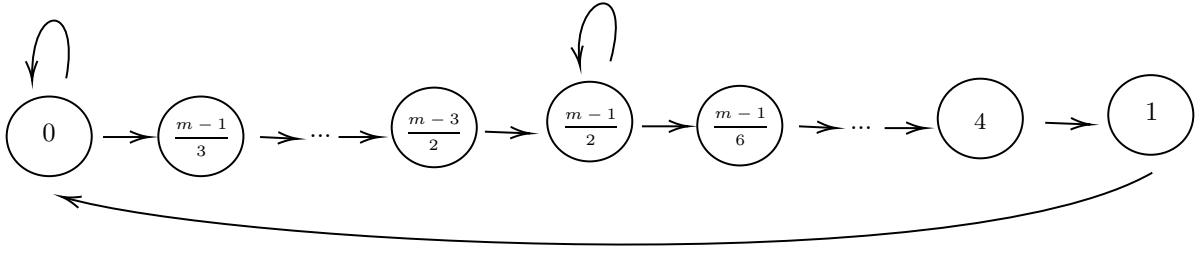
\end{center}

We now show that choosing to take the loop $k$ times for any positive integer $k$ will still result in a universal representation.
Suppose we choose $c_n=d_n=1$.  Then the $(n+1)$-th equation in our system is
\[
c_{n+1}=d_{n+1}+d_ne_1+d_{n-1}e_2+\cdots+d_2e_{n-1}+d_1,
\]
which is
\[
c_{n+1}=d_{n+1}+1(-1)+0\cdots+0+1, 
\]
so $c_{n+1}=d_{n+1}$.  Choose $c_{n+1}=d_{n+1}=1$.  The $(n+2)$-th equation is
\[
c_{n+2}=d_{n+2}+d_{n+1}e_1+d_ne_2+\cdots+d_3e_{n-1}+d_2,
\]
which is
\[
c_{n+2}=d_{n+2}+1(-1)+0+\cdots+0+1, 
\]
so $c_{n+2}=d_{n+2}$.  In general, suppose that $i\geq 1$ and for all $1\leq j<i$, we have chosen $c_{n+j}=d_{n+j}=1$.  Then the $(n+i)$-th equation is
\[
c_{n+i}=d_{n+i}+d_{n+i-1}e_1+d_{n+i-2}e_2+\cdots+d_{i+1}e_{n-1}+d_i,
\]
which is
\[
c_{n+i}=d_{n+i}+1(-1)+0+\cdots+0+1, 
\]
so $c_{n+i}=d_{n+i}$.  Suppose that for all $i$ with $1\leq i\leq k-1$, we choose $c_{n+i}=d_{n+i}=1$.  Then we have $c_1=0, d_1=1$, and $e_1=-1$; and $c_2=\cdots=c_{n-1}=0, c_n=\cdots=c_{n+k-1}=1, d_2=\cdots=d_{n-1}=1, d_n=\cdots=d_{n+k-1}=1$, and $e_2=\cdots=e_{n-1}=0$.  Then the $(n+k)$-th equation is
\[
c_{n+k}=d_{n+k}+d_{n+k-1}e_1+d_{n+k-2}e_2+\cdots+d_{k+1}e_{n-1}+d_k,
\]
which is 
\[
c_{n+k}=d_{n+k}+1(-1)+0+\cdots+0+1,
\]
so $c_{n+k}=d_{n+k}$.  We have now taken the loop from the vertex $\frac{m-1}{2}$ to itself $k$ times.  We choose $c_{n+k}=d_{n+k}=0$ to stop taking the loop and allow the algorithm to terminate.  Similar arguments to those outlined above show that $c_{n+k+1}=\cdots=c_{n+k+(n-1)}=1$ and $d_{n+k+1}=\cdots=d_{n+k+(n-1)}=0$.
Then we use the values we found for the coefficients $c_i$ and $d_i$ to write down the polynomials involved in the representation of $3^n-2$ that corresponds to taking the loop $k$ times.  These polynomials are $p_k(x)=1+x^n+\cdots+x^{n+k-1}+x^{n+k+1}+\cdots+x^{n+k+(n-1)}$ and $q_k(x)=1+x+\cdots+x^{n+k-1}$. We have $3^n-2=\frac{p_k(3)}{q_k(3)}$, and $x^n -2 = \frac{p_k(x)}{q_k(x)}$, giving a universal representation for $3^n-2$, and we note that we can obtain such a representation for any positive integer k.

\subsection{Representations of 100}\label{RepsOf100Section}
Now let us consider $m=100$, which is not covered by any of the theorems previously established in this section.  In fact, $m=100$ is the smallest integer that has only universal representations but is not covered by these theorems.  

Following the algorithm for $m=100$, we note that at the end of Step 1 we have a carry value of $33$.  As stated in Section \ref{sec:algorithm}, whenever the carry value is a multiple of 3, we have a choice of two directions.  Here, in Step $2$, we have a choice to move forward carrying either $44$ or $11$. We will refer to these choices in general as a \textit{step up} and a \textit{step down} from the current carry value, where the step up corresponds to the choice to add $m$ to the carry value and record $1\cdot3^i$ into $q$ and into $p$, while the step down corresponds to the choice to record $0\cdot3^i$ into $q$ and into $p$.   Returning to the algorithm for $100$, we see in Figure \ref{digraph for 100} that if we choose to step up from 33 and move forward carrying $44$, this will never lead us back to 0. Thus, in an effort to continue our search for representations of $100$, we step down from $33$ to $11$.

\begin{figure}[h!]
\tikzset{every picture/.style={line width=0.75pt}} 

\begin{tikzpicture}[x=0.75pt,y=0.75pt,yscale=-1,xscale=1]

\draw   (26,218.73) .. controls (26,209.92) and (32.98,202.78) .. (41.59,202.78) .. controls (50.2,202.78) and (57.18,209.92) .. (57.18,218.73) .. controls (57.18,227.55) and (50.2,234.69) .. (41.59,234.69) .. controls (32.98,234.69) and (26,227.55) .. (26,218.73) -- cycle ;
\draw    (69.2,218.28) -- (100.16,218.28) ;
\draw [shift={(102.16,218.28)}, rotate = 180] [color={rgb, 255:red, 0; green, 0; blue, 0 }  ][line width=0.75]    (10.93,-3.29) .. controls (6.95,-1.4) and (3.31,-0.3) .. (0,0) .. controls (3.31,0.3) and (6.95,1.4) .. (10.93,3.29)   ;
\draw   (110.62,218.73) .. controls (110.62,209.92) and (117.6,202.78) .. (126.21,202.78) .. controls (134.82,202.78) and (141.8,209.92) .. (141.8,218.73) .. controls (141.8,227.55) and (134.82,234.69) .. (126.21,234.69) .. controls (117.6,234.69) and (110.62,227.55) .. (110.62,218.73) -- cycle ;
\draw    (153.83,217.37) -- (184.79,217.37) ;
\draw [shift={(186.79,217.37)}, rotate = 180] [color={rgb, 255:red, 0; green, 0; blue, 0 }  ][line width=0.75]    (10.93,-3.29) .. controls (6.95,-1.4) and (3.31,-0.3) .. (0,0) .. controls (3.31,0.3) and (6.95,1.4) .. (10.93,3.29)   ;
\draw   (197.03,217.82) .. controls (197.03,209.01) and (204.01,201.87) .. (212.62,201.87) .. controls (221.23,201.87) and (228.21,209.01) .. (228.21,217.82) .. controls (228.21,226.64) and (221.23,233.78) .. (212.62,233.78) .. controls (204.01,233.78) and (197.03,226.64) .. (197.03,217.82) -- cycle ;
\draw    (240.23,217.37) -- (271.19,217.37) ;
\draw [shift={(273.19,217.37)}, rotate = 180] [color={rgb, 255:red, 0; green, 0; blue, 0 }  ][line width=0.75]    (10.93,-3.29) .. controls (6.95,-1.4) and (3.31,-0.3) .. (0,0) .. controls (3.31,0.3) and (6.95,1.4) .. (10.93,3.29)   ;
\draw   (281.66,218.73) .. controls (281.66,209.92) and (288.64,202.78) .. (297.24,202.78) .. controls (305.85,202.78) and (312.83,209.92) .. (312.83,218.73) .. controls (312.83,227.55) and (305.85,234.69) .. (297.24,234.69) .. controls (288.64,234.69) and (281.66,227.55) .. (281.66,218.73) -- cycle ;
\draw    (324.86,218.28) -- (355.82,218.28) ;
\draw [shift={(357.82,218.28)}, rotate = 180] [color={rgb, 255:red, 0; green, 0; blue, 0 }  ][line width=0.75]    (10.93,-3.29) .. controls (6.95,-1.4) and (3.31,-0.3) .. (0,0) .. controls (3.31,0.3) and (6.95,1.4) .. (10.93,3.29)   ;
\draw   (366.28,219.65) .. controls (366.28,210.83) and (373.42,203.69) .. (382.23,203.69) .. controls (391.04,203.69) and (398.19,210.83) .. (398.19,219.65) .. controls (398.19,228.46) and (391.04,235.6) .. (382.23,235.6) .. controls (373.42,235.6) and (366.28,228.46) .. (366.28,219.65) -- cycle ;
\draw    (411.27,218.28) -- (442.22,218.28) ;
\draw [shift={(444.22,218.28)}, rotate = 180] [color={rgb, 255:red, 0; green, 0; blue, 0 }  ][line width=0.75]    (10.93,-3.29) .. controls (6.95,-1.4) and (3.31,-0.3) .. (0,0) .. controls (3.31,0.3) and (6.95,1.4) .. (10.93,3.29)   ;
\draw   (452.69,219.65) .. controls (452.69,210.83) and (459.83,203.69) .. (468.64,203.69) .. controls (477.45,203.69) and (484.59,210.83) .. (484.59,219.65) .. controls (484.59,228.46) and (477.45,235.6) .. (468.64,235.6) .. controls (459.83,235.6) and (452.69,228.46) .. (452.69,219.65) -- cycle ;
\draw   (539.09,219.65) .. controls (539.09,210.83) and (546.24,203.69) .. (555.05,203.69) .. controls (563.86,203.69) and (571,210.83) .. (571,219.65) .. controls (571,228.46) and (563.86,235.6) .. (555.05,235.6) .. controls (546.24,235.6) and (539.09,228.46) .. (539.09,219.65) -- cycle ;
\draw    (497.67,220.1) -- (528.63,220.1) ;
\draw [shift={(530.63,220.1)}, rotate = 180] [color={rgb, 255:red, 0; green, 0; blue, 0 }  ][line width=0.75]    (10.93,-3.29) .. controls (6.95,-1.4) and (3.31,-0.3) .. (0,0) .. controls (3.31,0.3) and (6.95,1.4) .. (10.93,3.29)   ;
\draw    (553.79,241.07) .. controls (515.87,306.97) and (112.14,262.95) .. (47.02,243.48) ;
\draw [shift={(45.15,242.9)}, rotate = 18.04] [color={rgb, 255:red, 0; green, 0; blue, 0 }  ][line width=0.75]    (10.93,-3.29) .. controls (6.95,-1.4) and (3.31,-0.3) .. (0,0) .. controls (3.31,0.3) and (6.95,1.4) .. (10.93,3.29)   ;
\draw    (46.93,195.48) .. controls (52.17,165.1) and (34.31,164.49) .. (34.43,194.52) ;
\draw [shift={(34.46,196.39)}, rotate = 268.4] [color={rgb, 255:red, 0; green, 0; blue, 0 }  ][line width=0.75]    (10.93,-3.29) .. controls (6.95,-1.4) and (3.31,-0.3) .. (0,0) .. controls (3.31,0.3) and (6.95,1.4) .. (10.93,3.29)   ;
\draw    (141.8,202.78) -- (186.42,175.55) ;
\draw [shift={(188.12,174.51)}, rotate = 148.61] [color={rgb, 255:red, 0; green, 0; blue, 0 }  ][line width=0.75]    (10.93,-3.29) .. controls (6.95,-1.4) and (3.31,-0.3) .. (0,0) .. controls (3.31,0.3) and (6.95,1.4) .. (10.93,3.29)   ;
\draw   (193.47,167.67) .. controls (193.47,158.86) and (200.45,151.71) .. (209.06,151.71) .. controls (217.67,151.71) and (224.65,158.86) .. (224.65,167.67) .. controls (224.65,176.48) and (217.67,183.63) .. (209.06,183.63) .. controls (200.45,183.63) and (193.47,176.48) .. (193.47,167.67) -- cycle ;
\draw    (378.31,199.13) .. controls (338.38,178.47) and (326.55,175.51) .. (294.7,198.08) ;
\draw [shift={(293.24,199.13)}, rotate = 324.27] [color={rgb, 255:red, 0; green, 0; blue, 0 }  ][line width=0.75]    (10.93,-3.29) .. controls (6.95,-1.4) and (3.31,-0.3) .. (0,0) .. controls (3.31,0.3) and (6.95,1.4) .. (10.93,3.29)   ;
\draw    (231.33,157.18) -- (258.49,143.5) ;
\draw [shift={(260.28,142.6)}, rotate = 153.25] [color={rgb, 255:red, 0; green, 0; blue, 0 }  ][line width=0.75]    (10.93,-3.29) .. controls (6.95,-1.4) and (3.31,-0.3) .. (0,0) .. controls (3.31,0.3) and (6.95,1.4) .. (10.93,3.29)   ;
\draw   (266.51,137.58) .. controls (266.51,128.77) and (273.49,121.62) .. (282.1,121.62) .. controls (290.71,121.62) and (297.69,128.77) .. (297.69,137.58) .. controls (297.69,146.39) and (290.71,153.54) .. (282.1,153.54) .. controls (273.49,153.54) and (266.51,146.39) .. (266.51,137.58) -- cycle ;
\draw    (304.37,136.21) -- (335.33,136.21) ;
\draw [shift={(337.33,136.21)}, rotate = 180] [color={rgb, 255:red, 0; green, 0; blue, 0 }  ][line width=0.75]    (10.93,-3.29) .. controls (6.95,-1.4) and (3.31,-0.3) .. (0,0) .. controls (3.31,0.3) and (6.95,1.4) .. (10.93,3.29)   ;
\draw   (347.57,136.67) .. controls (347.57,127.86) and (354.55,120.71) .. (363.16,120.71) .. controls (371.77,120.71) and (378.75,127.86) .. (378.75,136.67) .. controls (378.75,145.48) and (371.77,152.63) .. (363.16,152.63) .. controls (354.55,152.63) and (347.57,145.48) .. (347.57,136.67) -- cycle ;
\draw    (294.13,118.89) -- (317.45,101.83) ;
\draw [shift={(319.07,100.65)}, rotate = 143.83] [color={rgb, 255:red, 0; green, 0; blue, 0 }  ][line width=0.75]    (10.93,-3.29) .. controls (6.95,-1.4) and (3.31,-0.3) .. (0,0) .. controls (3.31,0.3) and (6.95,1.4) .. (10.93,3.29)   ;
\draw   (314.61,81.96) .. controls (314.61,73.14) and (321.59,66) .. (330.2,66) .. controls (338.81,66) and (345.79,73.14) .. (345.79,81.96) .. controls (345.79,90.77) and (338.81,97.91) .. (330.2,97.91) .. controls (321.59,97.91) and (314.61,90.77) .. (314.61,81.96) -- cycle ;
\draw    (341.34,99.74) -- (356.05,116.47) ;
\draw [shift={(357.37,117.98)}, rotate = 228.68] [color={rgb, 255:red, 0; green, 0; blue, 0 }  ][line width=0.75]    (10.93,-3.29) .. controls (6.95,-1.4) and (3.31,-0.3) .. (0,0) .. controls (3.31,0.3) and (6.95,1.4) .. (10.93,3.29)   ;
\draw   (425.07,138.49) .. controls (425.07,129.68) and (432.05,122.53) .. (440.66,122.53) .. controls (449.27,122.53) and (456.25,129.68) .. (456.25,138.49) .. controls (456.25,147.3) and (449.27,154.45) .. (440.66,154.45) .. controls (432.05,154.45) and (425.07,147.3) .. (425.07,138.49) -- cycle ;
\draw    (385.43,138.04) -- (416.39,138.04) ;
\draw [shift={(418.39,138.04)}, rotate = 180] [color={rgb, 255:red, 0; green, 0; blue, 0 }  ][line width=0.75]    (10.93,-3.29) .. controls (6.95,-1.4) and (3.31,-0.3) .. (0,0) .. controls (3.31,0.3) and (6.95,1.4) .. (10.93,3.29)   ;
\draw    (463.82,138.04) -- (494.78,138.04) ;
\draw [shift={(496.78,138.04)}, rotate = 180] [color={rgb, 255:red, 0; green, 0; blue, 0 }  ][line width=0.75]    (10.93,-3.29) .. controls (6.95,-1.4) and (3.31,-0.3) .. (0,0) .. controls (3.31,0.3) and (6.95,1.4) .. (10.93,3.29)   ;
\draw   (507.02,138.49) .. controls (507.02,129.68) and (514,122.53) .. (522.61,122.53) .. controls (531.22,122.53) and (538.2,129.68) .. (538.2,138.49) .. controls (538.2,147.3) and (531.22,154.45) .. (522.61,154.45) .. controls (514,154.45) and (507.02,147.3) .. (507.02,138.49) -- cycle ;
\draw    (545.82,139.04) -- (576.78,139.04) ;
\draw [shift={(578.78,139.04)}, rotate = 180] [color={rgb, 255:red, 0; green, 0; blue, 0 }  ][line width=0.75]    (10.93,-3.29) .. controls (6.95,-1.4) and (3.31,-0.3) .. (0,0) .. controls (3.31,0.3) and (6.95,1.4) .. (10.93,3.29)   ;
\draw   (589.02,139.49) .. controls (589.02,130.68) and (596,123.53) .. (604.61,123.53) .. controls (613.22,123.53) and (620.2,130.68) .. (620.2,139.49) .. controls (620.2,148.3) and (613.22,155.45) .. (604.61,155.45) .. controls (596,155.45) and (589.02,148.3) .. (589.02,139.49) -- cycle ;
\draw    (592,117) -- (559.54,90.27) ;
\draw [shift={(558,89)}, rotate = 39.47] [color={rgb, 255:red, 0; green, 0; blue, 0 }  ][line width=0.75]    (10.93,-3.29) .. controls (6.95,-1.4) and (3.31,-0.3) .. (0,0) .. controls (3.31,0.3) and (6.95,1.4) .. (10.93,3.29)   ;
\draw   (522.61,79.96) .. controls (522.61,71.14) and (529.59,64) .. (538.2,64) .. controls (546.81,64) and (553.79,71.14) .. (553.79,79.96) .. controls (553.79,88.77) and (546.81,95.91) .. (538.2,95.91) .. controls (529.59,95.91) and (522.61,88.77) .. (522.61,79.96) -- cycle ;
\draw    (593,156) .. controls (586.11,187.52) and (326.95,162.77) .. (290.51,155.33) ;
\draw [shift={(289,155)}, rotate = 13.13] [color={rgb, 255:red, 0; green, 0; blue, 0 }  ][line width=0.75]    (10.93,-3.29) .. controls (6.95,-1.4) and (3.31,-0.3) .. (0,0) .. controls (3.31,0.3) and (6.95,1.4) .. (10.93,3.29)   ;
\draw    (519,88) -- (447.8,122.14) ;
\draw [shift={(446,123)}, rotate = 334.38] [color={rgb, 255:red, 0; green, 0; blue, 0 }  ][line width=0.75]    (10.93,-3.29) .. controls (6.95,-1.4) and (3.31,-0.3) .. (0,0) .. controls (3.31,0.3) and (6.95,1.4) .. (10.93,3.29)   ;
\draw    (561,76) -- (597,76.95) ;
\draw [shift={(599,77)}, rotate = 181.51] [color={rgb, 255:red, 0; green, 0; blue, 0 }  ][line width=0.75]    (10.93,-3.29) .. controls (6.95,-1.4) and (3.31,-0.3) .. (0,0) .. controls (3.31,0.3) and (6.95,1.4) .. (10.93,3.29)   ;
\draw   (603.02,75.49) .. controls (603.02,66.68) and (610,59.53) .. (618.61,59.53) .. controls (627.22,59.53) and (634.2,66.68) .. (634.2,75.49) .. controls (634.2,84.3) and (627.22,91.45) .. (618.61,91.45) .. controls (610,91.45) and (603.02,84.3) .. (603.02,75.49) -- cycle ;
\draw   (561.13,28.49) .. controls (561.13,19.68) and (568.11,12.53) .. (576.72,12.53) .. controls (585.33,12.53) and (592.31,19.68) .. (592.31,28.49) .. controls (592.31,37.3) and (585.33,44.45) .. (576.72,44.45) .. controls (568.11,44.45) and (561.13,37.3) .. (561.13,28.49) -- cycle ;
\draw    (605,66) -- (585.32,43.51) ;
\draw [shift={(584,42)}, rotate = 48.81] [color={rgb, 255:red, 0; green, 0; blue, 0 }  ][line width=0.75]    (10.93,-3.29) .. controls (6.95,-1.4) and (3.31,-0.3) .. (0,0) .. controls (3.31,0.3) and (6.95,1.4) .. (10.93,3.29)   ;
\draw    (564,42) -- (551.04,63.29) ;
\draw [shift={(550,65)}, rotate = 301.33] [color={rgb, 255:red, 0; green, 0; blue, 0 }  ][line width=0.75]    (10.93,-3.29) .. controls (6.95,-1.4) and (3.31,-0.3) .. (0,0) .. controls (3.31,0.3) and (6.95,1.4) .. (10.93,3.29)   ;

\draw (35.2,210.23) node [anchor=north west][inner sep=0.75pt]   [align=left] {0};
\draw (117.6,211.15) node [anchor=north west][inner sep=0.75pt]   [align=left] {33};
\draw (205.85,210.23) node [anchor=north west][inner sep=0.75pt]   [align=left] {11};
\draw (286.85,210.23) node [anchor=north west][inner sep=0.75pt]   [align=left] {37};
\draw (372.73,211.15) node [anchor=north west][inner sep=0.75pt]   [align=left] {12};
\draw (464.92,211.15) node [anchor=north west][inner sep=0.75pt]   [align=left] {4};
\draw (548.66,210.23) node [anchor=north west][inner sep=0.75pt]   [align=left] {1};
\draw (200.45,159.17) node [anchor=north west][inner sep=0.75pt]   [align=left] {44};
\draw (273.49,129.99) node [anchor=north west][inner sep=0.75pt]   [align=left] {48};
\draw (355.44,129.08) node [anchor=north west][inner sep=0.75pt]   [align=left] {16};
\draw (321.59,74.37) node [anchor=north west][inner sep=0.75pt]   [align=left] {49};
\draw (434.27,130.9) node [anchor=north west][inner sep=0.75pt]   [align=left] {5};
\draw (514.9,130.9) node [anchor=north west][inner sep=0.75pt]   [align=left] {35};
\draw (596.9,131.9) node [anchor=north west][inner sep=0.75pt]   [align=left] {45};
\draw (528.59,73.37) node [anchor=north west][inner sep=0.75pt]   [align=left] {15};
\draw (610.9,67.9) node [anchor=north west][inner sep=0.75pt]   [align=left] {38};
\draw (569,20.9) node [anchor=north west][inner sep=0.75pt]   [align=left] {46};

\end{tikzpicture}
\caption{The subgraph of the multiplication transducer for $m=100$ that encodes all representations of $100$ as a quotient of sums of distinct powers of 3.}\label{digraph for 100}
\end{figure}
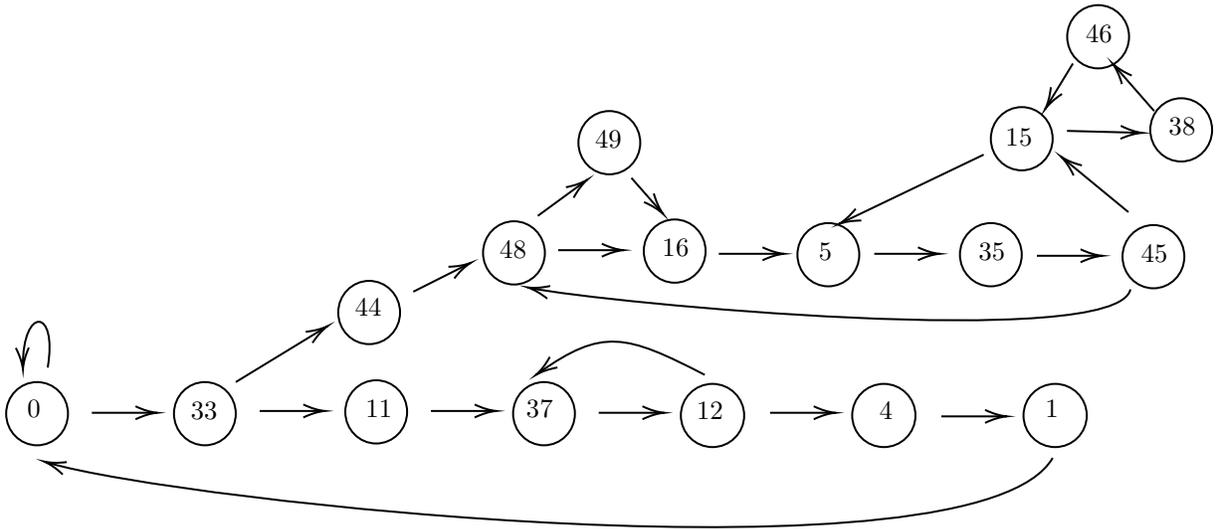

Continuing with the algorithm, our next carry value is $37=3^3+3^2+3^0$, a sum of distinct powers of 3.  According to the algorithm, we now record  $100 = \frac{[{\bf1101}001]_3}{\hspace{1.81em}[101]_3}$, where the rightmost three digits in both the numerator and denominator come from the three steps carrying 0, then $33$, then $11$, and the remaining bold digits in the numerator come from appending $[1101]_3$, the base 3 expansion of 37, to $p$.   We have $100=\frac{3^6+3^5+3^3+1}{3^2+1}$, and  with $p(x)=x^6+x^5+x^3+1$ and $q(x)=x^2+1$, this is $100=\frac{p(3)}{q(3)}$.  Is this a universal representation or a local representation?  Since $p(x)=(x^4+x^3-x^2+1)q(x)$, this representation is universal.

As soon as we reach a carry value that is a sum of distinct powers of 3, in this case the carry value 37, the algorithm takes the shortest possible path back to 0 by always choosing to step down when the carry value is congruent to $0\bmod 3$. We could obtain other indecomposable representations of $100$ by deviating from the algorithm and instead choosing to step up from 12, possibly repeatedly, and each step up from $12$ would create a two-step loop in the journey back to 0.  In fact, we could obtain infinitely many indecomposable representations in this fashion.  Would they be all universal, all local, or some of each?

\begin{theorem}\label{100 indecomposable}
Every indecomposable representation of $100$ is universal.  Moreover, for any indecomposable representation of $100$ as $\frac{p(3)}{q(3)}$, the quotient $\frac{p(x)}{q(x)}$ is $x^4+x^3-x^2+1$.
\end{theorem}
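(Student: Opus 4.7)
The plan is to enumerate all indecomposable walks from $0$ to $0$ in the multiplication transducer for $m=100$ pictured in Figure \ref{digraph for 100}, and to verify the claimed polynomial identity on each such walk. By Definition \ref{decomposable and indecomposable} and the description of the algorithm in Section \ref{sec:algorithm}, the initial edge of every walk is $0 \to 33$, since Step $0$ is always a step up.

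The first step is to show that the only indecomposable walks are those of the form
\[
0 \to 33 \to 11 \to 37 \to \underbrace{(12 \to 37)^k}_{k \geq 0 \text{ loops}} \to 12 \to 4 \to 1 \to 0.
\]
From $33$, the alternative to $11$ is to step up to $44$, so I would verify that the subgraph rooted at $44$, whose vertices are $\{44,48,49,16,5,35,45,15,38,46\}$, is a trap: a finite check using the algorithm rules shows that every outgoing edge from each of these ten vertices stays within the subgraph, so this branch never returns to $0$. From $11$ the step is forced to $37$, from $37$ to $12$, from $4$ to $1$, and from $1$ to $0$; the only remaining freedom is at $12$, where one may either step down to $4$ or step up through $37$ back to $12$. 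Hence indecomposable walks are parameterized by the number $k \geq 0$ of loops at $12$.

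Next, I would read off the digits of $p$ and $q$ recorded along this walk using the algorithm rules (a $1$ is recorded in $q$ at up-steps, and a $1$ is recorded in $p$ at up-steps and at carries $\equiv 1 \pmod 3$). The up-steps occur at positions $0, 2, 4, 6, \ldots, 2k+2$, giving
\[
q_k(x) = 1 + x^2 + x^4 + \cdots + x^{2k+2} = \sum_{j=0}^{k+1} x^{2j},
\]
and a short case analysis on each remaining step gives
\[
p_k(x) = 1 + x^3 + x^4 + x^5 + \cdots + x^{2k+3} + x^{2k+5} + x^{2k+6},
\]
with a single gap at $x^{2k+4}$.

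Finally, I would verify the identity $p_k(x) = (x^4 + x^3 - x^2 + 1)\,q_k(x)$ by expanding the right-hand side as $x^4 q_k(x) + x^3 q_k(x) - x^2 q_k(x) + q_k(x)$ and collecting coefficients: the odd-power contributions come solely from $x^3 q_k(x)$ and produce $1$ at $x^3, x^5, \ldots, x^{2k+5}$, while the three summands at even powers telescope to give $1$ at $x^0, x^4, x^6, \ldots, x^{2k+2}, x^{2k+6}$ and $0$ at $x^2$ and $x^{2k+4}$. This matches $p_k(x)$ exactly, proving both that each indecomposable representation is universal and that the polynomial quotient is $x^4 + x^3 - x^2 + 1$. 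The main obstacle is the transducer analysis, namely confirming that the ten-vertex subgraph rooted at $44$ truly does not reconnect to the return path, because once that is established, the polynomial identity reduces to a single telescoping computation independent of $k$.
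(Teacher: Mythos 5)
Your proposal is correct and follows essentially the same approach as the paper: rule out the branch through the vertex $44$, parameterize the remaining indecomposable walks by the number of $12 \to 37 \to 12$ loops, and arrive at exactly the paper's formulas for $p_k(x)$ and $q_k(x)$. The only difference is that you verify $p_k(x) = (x^4+x^3-x^2+1)\,q_k(x)$ by a direct telescoping expansion, while the paper proves it by induction on the loop count $k$; both computations are equivalent, with yours arguably slightly cleaner since it avoids the inductive bookkeeping.
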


\begin{proof}
As noted above, taking a step up from $33$ does not lead to a representation of $100$.  Also noted above, if we step down the first time we encounter a carry value of $12$, we obtain a universal representation.  Instead, suppose that when we first encounter a carry value of $12$, we step up from $12$ to $37$, follow the two-step loop exactly once, and then continue as usual. This yields the recorded quotient 
\[100 =  \frac{[{110}\underline{11}1001]_3}{\hspace{1.5em}[\underline{01}0101]_3}= \frac{3^8 + 3^7 +3^5 + 3^4+ 3^3+1}{3^4+3^2+1}.\]
If we step up from $12$ to $37$ exactly twice and then continue, we record 
\[100 =  \frac{[{110}\underline{1111}1001]_3}{\hspace{1.5em}[\underline{0101}0101]_3}= \frac{3^{10} + 3^9 +3^7+3^6+3^5 + 3^4+ 3^3+1}{3^6+3^4+3^2+1}.\]
To generalize, after stepping up from 12 to $37$ exactly $j$ times, which means taking the two-step loop in Figure \ref{digraph for 100} exactly $j$ times, we have
\[
100=\frac{3^{4+2j+2}+3^{4+2j+1}+\displaystyle\sum_{k=4}^{4+2j-1}3^k+3^3+1}{\displaystyle\sum_{i=1}^j3^{2+2i}+3^2+1}.
\]

For any $j\geq 0$, let $p_j(x)=x^{4+2j+2}+x^{4+2j+1}+\sum\limits_{k=4}^{4+2j-1}x^k+x^3+1$ and $q_j(x)=\sum\limits_{i=1}^jx^{2+2i}+x^2+1$.  We will show by induction that $p_j(x)=(x^4+x^3-x^2+1)q_j(x)$ for any $j\geq 0$.  For the base case, note that when $j=0$, we have $p_0(x)=x^6+x^5+x^3+1$ and $q_0(x)=x^2+1$, so $p_0(x)=(x^4+x^3-x^2+1)q_0(x)$.  Suppose that $t$ is a non-negative integer with $p_t(x)=(x^4+x^3-x^2+1)q_t(x)$.  We will show that $p_{t+1}(x)=(x^4+x^3-x^2+1)q_{t+1}(x)$ to complete the proof.  First note that
\begin{align*}
(x^4+x^3-x^2+1)q_{t+1}(x)&=(x^4+x^3-x^2+1)\left(\displaystyle\sum_{i=1}^{t+1}x^{2+2i}+x^2+1\right)\\
&=(x^4+x^3-x^2+1)\left(x^{2+2(t+1)}+\displaystyle\sum_{i=1}^{t}x^{2+2i}+x^2+1\right)\\
&=(x^4+x^3-x^2+1)(x^{2+2t+2})+p_t(x).\\
\end{align*}
Now observe that $p_t(x)=x^{4+2t+2}+x^{4+2t+1}+\sum\limits_{k=4}^{4+2t-1}x^k+x^3+1$, while
\begin{align*}
p_{t+1}(x)&=x^{4+2(t+1)+2}+x^{4+2(t+1)+1}+\displaystyle\sum_{k=4}^{4+2(t+1)-1}x^k+x^3+1\\
&=x^{4+2t+4}+x^{4+2t+3}+x^{4+2t+1}+x^{4+2t}+\displaystyle\sum_{k=4}^{4+2t-1}x^k+x^3+1\\
&=x^{4+2t+4}+x^{4+2t+3}+x^{4+2t+1}+x^{4+2t}+\displaystyle\sum_{k=4}^{4+2t-1}x^k+x^3+1+(x^{4+2t+2}-x^{4+2t+2})\\
&=x^{4+2t+4}+x^{4+2t+3}-x^{4+2t+2}+x^{4+2t}+p_t(x)\\
&=x^{4+2t}(x^4+x^3-x^2+1)+p_t(x).
\end{align*}
Thus $p_{t+1}(x)=(x^4+x^3-x^2+1)q_{t+1}(x)$, and our proof by induction is complete.  

Since we have considered all indecomposable representations of $100$, we conclude that every indecomposable representation of $100$ is universal.
\end{proof}

\begin{theorem}\label{EveryRepOf100Universal}
Every representation of $100$ is universal.  Moreover, for any representation of $100$ as $\frac{p(3)}{q(3)}$, the quotient $\frac{p(x)}{q(x)}$ is $x^4+x^3-x^2+1$.  This includes both indecomposable and decomposable representations.
\end{theorem}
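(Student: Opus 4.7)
The plan is to reduce the decomposable case to the indecomposable case, which was already handled in Theorem \ref{100 indecomposable}, by invoking Lemma \ref{ForTheorem2.5}. First I would recall from Definition \ref{decomposable and indecomposable} and the paragraph that follows it that any decomposable representation of $100$ corresponds to a closed walk through the digraph in Figure \ref{digraph for 100} that revisits the vertex $0$ one or more times. By cutting such a walk at each intermediate visit to $0$, I get a concatenation of indecomposable walks, each of which contributes an indecomposable representation $100 = \frac{p_j(3)}{q_j(3)}$ with $p_j(x), q_j(x) \in \mathcal{P}$.

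Next I would write the full representation in the concatenated form described just after Definition \ref{decomposable and indecomposable}: there exist non-negative integers $n_1 < n_2 < \cdots < n_r$, determined by the lengths of the successive indecomposable pieces and the zero-padding in the denominators, such that
\[
p(x) = \sum_{j=1}^{r} x^{n_j} p_j(x) \qquad \text{and} \qquad q(x) = \sum_{j=1}^{r} x^{n_j} q_j(x),
\]
and such that $p(x), q(x) \in \mathcal{P}$. The hypothesis that $p, q \in \mathcal{P}$ is automatic because the representation at hand is, by assumption, of the form $\frac{p(3)}{q(3)}$ with $p, q \in \mathcal{P}$; the point is that the offsets $n_j$ built into the concatenation are precisely what prevents overlaps between the pieces.

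Now I would apply Theorem \ref{100 indecomposable} to each indecomposable piece, which gives $\frac{p_j(x)}{q_j(x)} = x^4 + x^3 - x^2 + 1$ for every $j$. With all piecewise quotients equal to the common polynomial $t(x) = x^4 + x^3 - x^2 + 1$, Lemma \ref{ForTheorem2.5} applies directly and yields $\frac{p(x)}{q(x)} = x^4 + x^3 - x^2 + 1$ as well. In particular $q(x) \mid p(x)$, so the representation is universal, and its polynomial quotient is the claimed $x^4 + x^3 - x^2 + 1$.

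The main obstacle I anticipate is bookkeeping rather than conceptual: one has to verify that the decomposition of a closed walk that visits $0$ multiple times really does produce Newman polynomials $p_j, q_j$ and shifts $n_j$ satisfying the hypotheses of Lemma \ref{ForTheorem2.5}. Once this is spelled out, induction on the number of interior visits to $0$ (or equivalently on $r$) reduces the decomposable case to the base case handled in Theorem \ref{100 indecomposable}, and the theorem follows.
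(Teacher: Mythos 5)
Your proposal is correct and takes essentially the same route as the paper: decompose a closed walk at its interior visits to $0$ into indecomposable pieces, apply Theorem \ref{100 indecomposable} to each piece to get the common quotient $x^4+x^3-x^2+1$, and then recombine to conclude $q(x)\mid p(x)$. The only difference is economy of presentation: the paper first treats concatenations without loops at $0$ by a direct expansion of $(x^4+x^3-x^2+1)q(x)$ and only invokes Lemma \ref{ForTheorem2.5} at the end to absorb the loops, whereas you invoke Lemma \ref{ForTheorem2.5} once in full generality (the shifts $n_j$, which encode both the lengths of the pieces and any zero-padding from loops at $0$, satisfy its hypotheses with no extra conditions needed). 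Both arguments are sound; yours is a touch more streamlined, but the underlying idea and the key lemma are the same.
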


\begin{proof}
We have addressed the indecomposable representations of $100$ in Theorem \ref{100 indecomposable}, so it remains to consider the decomposable representations of $100$.  Suppose we have a representation of $100$ resulting from the concatenation of two indecomposable walks through the digraph with no loops at 0 occurring between the two walks. On the first indecomposable walk, we step up from $12$ to $37$ exactly $j$ times, and on the second indecomposable walk, we step up from $12$ to $37$ exactly $k$ times.  

The contributions to $p(x)$ and $q(x)$ coming from this first indecomposable walk are $p_j(x)$ and $q_j(x)$, as defined in the proof of Theorem \ref{100 indecomposable}, while the contributions coming from the second indecomposable walk are $x^{2j+7}p_k(x)$ and $x^{2j+7}q_k(x)$.  Note that there are a total of $2j+7$ steps in the first indecomposable walk, so the second indecomposable walk begins recording powers of 3 starting with $3^{2j+7}$.  Thus, for the representation of $100$ resulting from two indecomposable walks, we have $p(x)=x^{2j+7}p_k(x)+p_j(x)$ and $q(x)=x^{2j+7}q_k(x)+q_j(x)$.

Note that 
\begin{align*}
(x^4+x^3-x^2+1)q(x)&=(x^4+x^3-x^2+1)\left(x^{2j+7}q_k(x)+q_j(x)\right)\\
&=x^{2j+7}(x^4+x^3-x^2+1)q_k(x)+(x^4+x^3-x^2+1)q_j(x)\\
&=x^{2j+7}p_k(x)+p_j(x)\\
&=p(x).
\end{align*}
Thus $q(x)\mid p(x)$, and this representation of $100$ must be universal.  

What if a representation results from concatenating not just two indecomposable walks through the digraph but $n$ indecomposable walks with no loops at 0 between any two walks?  Let $i_1, i_2,\ldots i_n$ be the non-negative integers such that $i_h$ is the number of times we step up from $12$ to $37$ on the $h$th indecomposable walk through the digraph.  Then we have
\begin{align*}
p(x)&=x^{2(i_1+i_2+\cdots+i_{n-1})+7(n-1)}p_{i_n}(x)+x^{2(i_1+i_2+\cdots+i_{n-2})+7(n-2)}p_{i_{n-1}}(x)+\cdots\\
&\qquad+x^{2(i_1+i_2)+7(2)}p_{i_3}(x)+x^{2i_1+7}p_{i_2}(x)+p_{i_1}(x)\\
&=\displaystyle\sum_{j=1}^n x^{2a_j+7(j-1)}p_{i_j}(x),
\end{align*}
where $a_j=\sum\limits_{k=1}^{j-1} i_k$, and
\begin{align*}
q(x)&=x^{2(i_1+i_2+\cdots+i_{n-1})+7(n-1)}q_{i_n}(x)+x^{2(i_1+i_2+\cdots+i_{n-2})+7(n-2)}q_{i_{n-1}}(x)+\cdots\\
&\qquad+x^{2(i_1+i_2)+7(2)}q_{i_3}(x)+x^{2i_1+7}q_{i_2}(x)+q_{i_1}(x)\\
&=\displaystyle\sum_{j=1}^n x^{2a_j+7(j-1)}q_{i_j}(x).
\end{align*}
Each pair of polynomials $p_{i_h}(x)$  and $q_{i_h}(x)$ gives an indecomposable representation of $100$ as $100=\frac{p_{i_h}(3)}{q_{i_h}(3)}$, so from Theorem \ref{100 indecomposable}, we have $p_{i_h}(x) = (x^4 + x^3 -x^2+1)q_{i_h}(x)$ for each $i_h$.  Then we see that
\begin{align*}
(x^4+x^3-x^2+1)q(x)&=(x^4+x^3-x^2+1)\displaystyle\sum_{j=1}^n x^{2a_j+7(j-1)}q_{i_j}(x)\\
&=\displaystyle\sum_{j=1}^n x^{2a_j+7(j-1)}(x^4+x^3-x^2+1)q_{i_j}(x)\\
&=\displaystyle\sum_{j=1}^n x^{2a_j+7(j-1)}p_{i_j}(x)\\
&=p(x).
\end{align*}
Thus $q(x)\mid p(x)$, and this representation of $100$ must be universal.

We conclude by proving that adding loops at 0 between any two indecomposable walks results in new representations of 100 which are also universal.  Consider $p(x)=\sum_hx^{b_h}p_{i_h}(x)$ and $q(x)=\sum_hx^{b_h}q_{i_h}(x)$, where $i_1, i_2,\ldots i_n$ are as above and $b_1, b_2,\ldots b_n$ are non-negative integers. Then by Lemma \ref{ForTheorem2.5}, we see that $p(x)=(x^4+x^3-x^2+1)q(x)$ as well.  Therefore, since adding loops at 0 changes only the powers $b_h$ of $x$ in $p(x)$ and $q(x)$, we see that representations of 100 including loops at 0 are still universal. As an example, if we take $\ell$ loops at 0 between walk $i_d$ and walk $i_{d+1}$, the resulting polynomials $p(x)$ and $q(x)$ yield
\begin{align*}
(x^4+x^3-x^2+1)q(x)&=(x^4+x^3-x^2+1)\left[\displaystyle\sum_{j=1}^d x^{2a_j+7(j-1)}q_{i_j}(x) + \displaystyle\sum_{j=d+1}^n x^{2a_j+7(j-1)+ \ell}q_{i_j}(x) \right]\\
&=\displaystyle\sum_{j=1}^d x^{2a_j+7(j-1)}(x^4+x^3-x^2+1)q_{i_j}(x) \\
&\qquad+\displaystyle\sum_{j=d+1}^n x^{2a_j+7(j-1)+ \ell}(x^4+x^3-x^2+1)q_{i_j}(x) \\
&=\displaystyle\sum_{j=1}^d x^{2a_j+7(j-1)}p_{i_j}(x) + \displaystyle\sum_{j=d+1}^n x^{2a_j+7(j-1)+ \ell}p_{i_j}(x)\\
&=p(x).
\end{align*}

Hence all representations of $100$ are universal.
\end{proof}


\section{Integers with local representations}\label{local representations}

As we have seen in Examples \ref{31 example} and \ref{37 example}, some integers $m\equiv1\pmod{3}$ have local representations in addition to a trivial universal representation.  In this section, we identify several other examples of integers with local representations. Specifically, in Section \ref{OnlyLocalSection}, we restrict our attention to integers $m$ with only local representations, and in Section \ref{64Section}, we show that $m=64$ has both local representations and nontrivial universal representations.

\subsection{Integers with only local representations}\label{OnlyLocalSection}

Some integers $m\equiv1\pmod{3}$ have only local representations.  Here we prove that all representations of $m=22$ and $m=34$ are local, and we list many more integers $m$ whose representations are all local.

\begin{theorem}\label{22allhard}
There do not exist $p(x),q(x) \in \mathcal P$ with $q(x) \ | \ p(x)$ such that $22 = \frac{p(3)}{q(3)}$; that is, all representations of $22$ are local, including both decomposable and indecomposable representations.
\end{theorem}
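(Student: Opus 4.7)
My plan is to rule out any universal representation of 22 by pinning down the polynomial quotient $g(x) := p(x)/q(x)$ essentially uniquely and then showing that the required $q(x)$ cannot exist. Since the argument only uses that $p, q \in \mathcal{P}$ and $p = gq$, it applies uniformly to all representations (decomposable or indecomposable), which is what the theorem asks for.

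Suppose $22 = p(3)/q(3)$ with $q(x) \mid p(x)$. Because $q$ is monic, $g(x) \in \mathbb{Z}[x]$, and Theorem \ref{T:feasible} together with $22 \in \mathcal{I}_3$ forces $\deg g = 3$. Matching leading and constant coefficients of $p = gq$ yields $g(x) = x^3 + c_2 x^2 + c_1 x + 1$ with $c_1, c_2 \in \mathbb{Z}$, and $g(3) = 22$ gives the linear relation $3c_2 + c_1 = -2$. I would then use positivity: $p(x), q(x) > 0$ for $x > 0$, so $g(x) > 0$ on $(0, \infty)$; evaluating at $x = 1$ gives $g(1) = -2c_2 > 0$, hence $c_2 \leq -1$.

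To isolate $g$, I would read off the coefficient of $x$ in $p = gq$, which equals $q_1 + c_1$. Since this must lie in $\{0,1\}$, and $c_1 = -2 - 3c_2 \geq 1$ while $q_1 \in \{0,1\}$, the only surviving possibility is $c_2 = -1$, $c_1 = 1$, $q_1 = 0$; so $g(x) = x^3 - x^2 + x + 1$. (The edge case $\deg q = 0$, i.e.\ $q = 1$, is handled separately: then $p = g$, but $g$ has a negative coefficient, contradicting $p \in \mathcal{P}$.)

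Finally, I would cascade upward. The coefficient of $x^2$ in $gq$ is $q_2 + q_1 - 1 = q_2 - 1$, which forces $q_2 = 1$. The coefficient of $x^3$ then becomes $q_3 + q_2 - q_1 + 1 = q_3 + 2 \geq 2$, which cannot lie in $\{0,1\}$, contradicting $p \in \mathcal{P}$. The main obstacle is really just arranging the linear relation, the positivity estimate, and the low-degree coefficient matching so that $g$ is pinned down in one shot; once $g(x) = x^3 - x^2 + x + 1$ is forced, the contradiction drops out within two further coefficients, independent of $\deg q$.
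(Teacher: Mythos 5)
Your proof is correct and matches the paper's overall architecture: use Theorem \ref{T:feasible} to force $\deg g = 3$, pin down $g(x)$ uniquely, and then cascade the coefficient constraints of $p = gq$ to a contradiction at the $x^3$ coefficient. You reach the same $g(x) = x^3 - x^2 + x + 1$, the same forced values $q_1 = 0$, $q_2 = 1$, and the same terminal contradiction $q_3 + 2 \in \{0,1\}$.

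The one place where you genuinely diverge is in how you isolate $g$. The paper reduces $g(3) = 22$ to $e_1 + 3e_2 = -2$, reads off $e_1 \equiv 1 \pmod 3$, and combines that with $e_1 = c_1 - d_1 \in \{-1,0,1\}$ (i.e.\ the very $x^1$ coefficient equation you also use, in Lemma~\ref{L:uni1} spirit) to conclude $e_1 = 1$. You instead evaluate $g$ at $x=1$: since $p(1), q(1) > 0$, positivity gives $g(1) = -2c_2 > 0$, so $c_2 \le -1$, hence $c_1 = -2 - 3c_2 \ge 1$; together with the $x^1$ coefficient constraint $q_1 + c_1 \in \{0,1\}$ this forces $c_1 = 1$, $q_1 = 0$, $c_2 = -1$. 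Both are clean; the paper's mod-3 congruence is the device it reuses for $34$ and the other ``only local'' integers, and it generalizes more systematically (Lemma~\ref{L:uni1}), whereas your positivity-at-$1$ trick is elementary and self-contained but tailored more to this particular relation $3c_2 + c_1 = -2$. Your handling of the $q = 1$ edge case (where $p = g$ would have a negative coefficient) is a nice explicit touch the paper leaves implicit. One small nit: you should state that $q_1, q_2, q_3 \in \{0,1\}$ with $q_j = 0$ understood when $j > \deg q$, so the cascade argument applies uniformly regardless of $\deg q$; this is what makes the final contradiction independent of $s$.
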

\begin{proof}
Suppose to the contrary that $22 = \frac{p(3)}{q(3)}$, where $p(x),q(x)\in\mathcal{P}$ and $g(x):=\frac{p(x)}{q(x)}\in\mathbb{Z}[x]$. Let $r=\deg p(x)$ and $s=\deg q(x)$.  By Theorem \ref{T:feasible},
\[
\frac 23 \cdot 3^{r-s} < 22 < \frac 32 \cdot 3^{r-s},
\]
which implies $r -s =
\deg g(x) = 3$.  Since the constant terms and the coefficients of $x^r$ in $p(x)$ and
$x^s$ in $q(x)$ are 1, we may then write $g(x) = 1 + e_1 x + e_2 x^2 +
x^3$. It is now convenient to take $p(x) = 1 + \sum\limits_{i=1}^{r-1}  c_i x^i +x^r$ and $q(x) = 1 + \sum\limits_{i=1}^{s-1} d_i x^i+x^s$, where $c_i, d_i \in \{0,1\}$.  The equation $p(x) = g(x)q(x)$ thus implies that
\begin{align*}
c_1 &= d_1 + e_1, \\
c_2 &= d_2 + d_1e_1 + e_2, \\
c_3 &= d_3 + d_2e_1 + d_1e_2 + 1.
\end{align*}
Since $22 = g(3) = 1 + 3e_1 + 9e_2 + 27$, we have $e_1 + 3e_2 = -2$,
and so $e_1 \equiv 1 \pmod 3$. But $e_1 = c_1 - d_1$ and $c_1,d_1 \in
\{0,1\}$, so we must have that $c_1 = 1, d_1 = 0,$ and $ e_1 = 1$.  Thus $3e_2 =
-3$, so $e_2 = -1$ and $g(x) = x^3 - x^2 + x + 1$.  Note that
$g(3) = 22$. The remaining two equations become
\begin{align*}
c_2 &= d_2 + 0\cdot 1 + (-1) = d_2 - 1,\\
c_3 &= d_3 + d_2\cdot 1 + 0\cdot(-1) + 1 = d_3 + d_2 + 1.
\end{align*}
It follows from the first of these two equations that $c_2 = 0$ and
$d_2 = 1$. It then follows from the second equation that $c_3 = d_3 + 2$, which
is impossible. 
\end{proof}

We can implement the same approach to prove that all representations of $34$ are local, or we can instead use Theorem \ref{22allhard} and prove the result regarding $34$ as a corollary.  Here we implement the latter approach.

\begin{corollary}\label{34allhard}
There do not exist $p(x),q(x) \in \mathcal P$ with $q(x) \ | \ p(x)$ such that $34 = \frac{p(3)}{q(3)}$; that is, all representations of $34$ are local, including both decomposable and indecomposable representations.

\end{corollary}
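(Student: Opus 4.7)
The plan is to argue by contradiction, leveraging Theorem \ref{22allhard} via a reciprocal-polynomial trick. Suppose for contradiction that $34 = \frac{p(3)}{q(3)}$ for some $p(x), q(x) \in \mathcal{P}$ with $q(x) \mid p(x)$ in $\mathbb{Z}[x]$, and set $g(x) := p(x)/q(x)$. Since $34 \in \mathcal{I}_3 = (18, \frac{81}{2})$, Theorem \ref{T:feasible} forces $\deg p - \deg q = 3$, so $g$ has degree $3$.

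The first step mirrors the opening of the proof of Theorem \ref{22allhard}. Writing $g(x) = 1 + e_1 x + e_2 x^2 + x^3$, the evaluation $g(3) = 34$ gives $e_1 + 3 e_2 = 2$, hence $e_1 \equiv 2 \pmod 3$. The coefficient identity $c_1 = d_1 + e_1$ from $p(x) = g(x)q(x)$, combined with $c_1, d_1 \in \{0,1\}$, confines $e_1$ to $\{-1, 0, 1\}$; the only possibility is $e_1 = -1$, whence $e_2 = 1$. Thus the candidate quotient is pinned down to $g(x) = x^3 + x^2 - x + 1$.

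The crucial observation — the only non-routine point — is that this $g(x)$ is the coefficient-reversal of $x^3 - x^2 + x + 1$, the polynomial that the proof of Theorem \ref{22allhard} shows to be the unique candidate quotient for a hypothetical universal representation of $22$. The plan is therefore to apply the reciprocation map $f(x) \mapsto \hat f(x) := x^{\deg f} f(1/x)$ to the factorization $p(x) = g(x) q(x)$. This map sends $\mathcal P_r$ to itself, because it merely reverses the sequence of $0,1$ coefficients while preserving the constant and leading terms. A direct expansion gives $\hat p(x) = \hat g(x)\,\hat q(x)$, where
\[
\hat g(x) = x^3 g(1/x) = x^3 - x^2 + x + 1.
\]
Evaluating at $x = 3$ yields $\hat g(3) = 22$, so $22 = \hat p(3)/\hat q(3)$ with $\hat p, \hat q \in \mathcal P$ and $\hat q \mid \hat p$ — a universal representation of $22$, contradicting Theorem \ref{22allhard}.

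The main obstacle is really just spotting the coefficient-reversal symmetry between the forced quotients for $22$ and $34$; once that observation is made, the argument reduces to recycling the computation from Theorem \ref{22allhard} to determine $g(x)$, plus a short check that reciprocation preserves $\mathcal P$ and respects the factorization. I expect no further calculation to be needed.
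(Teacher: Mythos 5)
Your proposal is correct and follows essentially the same route as the paper's proof: you first pin down the forced degree-$3$ quotient $g(x) = x^3 + x^2 - x + 1$ by the same coefficient-mod-$3$ argument used for $22$, and then observe that the reciprocation map $f(x) \mapsto x^{\deg f} f(1/x)$ preserves $\mathcal P$, respects the factorization, and carries $g$ to the quotient $x^3 - x^2 + x + 1$ ruled out in Theorem \ref{22allhard}. The paper writes the reversal out explicitly in terms of the exponent sets of $p$ and $q$, whereas you state it more compactly as an algebra-of-reciprocals argument, but the underlying idea is identical.
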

\begin{proof}
Let $p(x),q(x),r$, and $s$ be as in the proof of Theorem \ref{22allhard}, and let $h(x):=\frac{p(x)}{q(x)}\in\mathbb{Z}[x]$.  Again,
\[
\frac 23 \cdot 3^{r-s} < 34 < \frac 32 \cdot 3^{r-s},
\]
which implies $r -s =
\deg h(x) = 3$, and  we will continue following the same procedure as in the proof of Theorem \ref{22allhard}. Now $34 = h(3) = 1 + 3e_1 + 9e_2 + 27$, so 
$e_1 + 3e_2 = 2$ and $e_1 \equiv 2 \pmod 3$. But $e_1 = c_1 - d_1$ and $c_1,d_1 \in
\{0,1\}$, so we have that $c_1 = 0$, $d_1 = 1$, and $e_1 = -1$.  Thus $e_2 = 1$ and 
$h(x) = x^3 +x^2 - x + 1$. 

At this point, our argument diverges from that in the proof of Theorem \ref{22allhard}. Write $p(x)=\sum\limits_{k=0}^m x^{a_k}$ and $q(x)=\sum\limits_{\ell=0}^n x^{b_\ell}$.  We have $q(x)(x^3 +x^2 - x + 1) = p(x)$. Now
reverse the polynomials by taking $x \mapsto 1/x$ and multiplying $q(x)$ by $x^s$, $h(x)$ by $x^3$,
and $p(x)$ by $x^r$ to obtain
\begin{equation}\label{eq for 34}
\left( \sum_{\ell=0}^n x^{s-b_\ell}\right) (x^3-x^2+x+1) = \sum_{k=0}^m x^{r-a_k} . 
\end{equation}
Note that $x^3 h\left(\frac{1}{x}\right)=g(x)$, where $g(x)$ is as in the proof of Theorem \ref{22allhard}. Since the polynomials $x^s q\left(\frac{1}{x}\right)$ and $x^r p\left(\frac{1}{x}\right)$ are still in $\mathcal P$, we have already seen in the proof of Theorem \ref{22allhard} that Equation \eqref{eq for 34} cannot be satisfied, and we have a contradiction.
\end{proof}

The same type of proof by contradiction that we used to prove Theorem \ref{22allhard} by examining the coefficients of $p(x)$ and $q(x)$ can also be used to show that the following numbers have only local representations: 34, 58, 67, 97, 103, 106, 115, 175, 178, 184, 193, 199, 202, 205, 208, 214, 229, 232, 238, 259, 265, 277, 286, 295, 298, 304, 307, 310, 313, 331, 340. Additionally, the procedure can be applied to test larger integers, but the computations become increasingly tedious.

\subsection{Integers with both nontrivial universal representations and local representations}\label{64Section} Here we consider $m=64$ as an example of an integer that has both nontrivial universal representations and local representations.  See the digraph for $m=64$ in Figure \ref{digraph for 64} for reference.

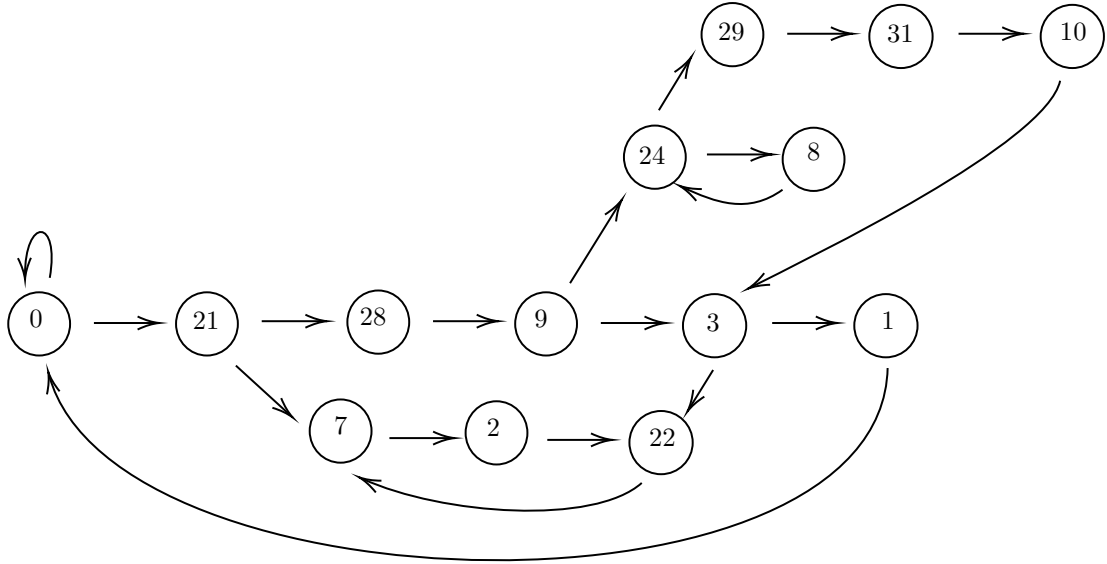
\begin{figure}
\tikzset{every picture/.style={line width=0.75pt}} 

\begin{tikzpicture}[x=0.75pt,y=0.75pt,yscale=-1,xscale=1]

\draw   (26,218.73) .. controls (26,209.92) and (32.98,202.78) .. (41.59,202.78) .. controls (50.2,202.78) and (57.18,209.92) .. (57.18,218.73) .. controls (57.18,227.55) and (50.2,234.69) .. (41.59,234.69) .. controls (32.98,234.69) and (26,227.55) .. (26,218.73) -- cycle ;
\draw    (69.2,218.28) -- (100.16,218.28) ;
\draw [shift={(102.16,218.28)}, rotate = 180] [color={rgb, 255:red, 0; green, 0; blue, 0 }  ][line width=0.75]    (10.93,-3.29) .. controls (6.95,-1.4) and (3.31,-0.3) .. (0,0) .. controls (3.31,0.3) and (6.95,1.4) .. (10.93,3.29)   ;
\draw   (110.62,218.73) .. controls (110.62,209.92) and (117.6,202.78) .. (126.21,202.78) .. controls (134.82,202.78) and (141.8,209.92) .. (141.8,218.73) .. controls (141.8,227.55) and (134.82,234.69) .. (126.21,234.69) .. controls (117.6,234.69) and (110.62,227.55) .. (110.62,218.73) -- cycle ;
\draw    (153.83,217.37) -- (184.79,217.37) ;
\draw [shift={(186.79,217.37)}, rotate = 180] [color={rgb, 255:red, 0; green, 0; blue, 0 }  ][line width=0.75]    (10.93,-3.29) .. controls (6.95,-1.4) and (3.31,-0.3) .. (0,0) .. controls (3.31,0.3) and (6.95,1.4) .. (10.93,3.29)   ;
\draw   (197.03,217.82) .. controls (197.03,209.01) and (204.01,201.87) .. (212.62,201.87) .. controls (221.23,201.87) and (228.21,209.01) .. (228.21,217.82) .. controls (228.21,226.64) and (221.23,233.78) .. (212.62,233.78) .. controls (204.01,233.78) and (197.03,226.64) .. (197.03,217.82) -- cycle ;
\draw    (240.23,217.37) -- (271.19,217.37) ;
\draw [shift={(273.19,217.37)}, rotate = 180] [color={rgb, 255:red, 0; green, 0; blue, 0 }  ][line width=0.75]    (10.93,-3.29) .. controls (6.95,-1.4) and (3.31,-0.3) .. (0,0) .. controls (3.31,0.3) and (6.95,1.4) .. (10.93,3.29)   ;
\draw   (281.66,218.73) .. controls (281.66,209.92) and (288.64,202.78) .. (297.24,202.78) .. controls (305.85,202.78) and (312.83,209.92) .. (312.83,218.73) .. controls (312.83,227.55) and (305.85,234.69) .. (297.24,234.69) .. controls (288.64,234.69) and (281.66,227.55) .. (281.66,218.73) -- cycle ;
\draw    (324.86,218.28) -- (355.82,218.28) ;
\draw [shift={(357.82,218.28)}, rotate = 180] [color={rgb, 255:red, 0; green, 0; blue, 0 }  ][line width=0.75]    (10.93,-3.29) .. controls (6.95,-1.4) and (3.31,-0.3) .. (0,0) .. controls (3.31,0.3) and (6.95,1.4) .. (10.93,3.29)   ;
\draw   (366.28,219.65) .. controls (366.28,210.83) and (373.42,203.69) .. (382.23,203.69) .. controls (391.04,203.69) and (398.19,210.83) .. (398.19,219.65) .. controls (398.19,228.46) and (391.04,235.6) .. (382.23,235.6) .. controls (373.42,235.6) and (366.28,228.46) .. (366.28,219.65) -- cycle ;
\draw    (411.27,218.28) -- (442.22,218.28) ;
\draw [shift={(444.22,218.28)}, rotate = 180] [color={rgb, 255:red, 0; green, 0; blue, 0 }  ][line width=0.75]    (10.93,-3.29) .. controls (6.95,-1.4) and (3.31,-0.3) .. (0,0) .. controls (3.31,0.3) and (6.95,1.4) .. (10.93,3.29)   ;
\draw   (452.69,219.65) .. controls (452.69,210.83) and (459.83,203.69) .. (468.64,203.69) .. controls (477.45,203.69) and (484.59,210.83) .. (484.59,219.65) .. controls (484.59,228.46) and (477.45,235.6) .. (468.64,235.6) .. controls (459.83,235.6) and (452.69,228.46) .. (452.69,219.65) -- cycle ;
\draw    (469.5,241) .. controls (466.52,374.33) and (79.4,365.1) .. (46.62,244.72) ;
\draw [shift={(46.15,242.9)}, rotate = 76.48] [color={rgb, 255:red, 0; green, 0; blue, 0 }  ][line width=0.75]    (10.93,-3.29) .. controls (6.95,-1.4) and (3.31,-0.3) .. (0,0) .. controls (3.31,0.3) and (6.95,1.4) .. (10.93,3.29)   ;
\draw    (46.93,195.48) .. controls (52.17,165.1) and (34.31,164.49) .. (34.43,194.52) ;
\draw [shift={(34.46,196.39)}, rotate = 268.4] [color={rgb, 255:red, 0; green, 0; blue, 0 }  ][line width=0.75]    (10.93,-3.29) .. controls (6.95,-1.4) and (3.31,-0.3) .. (0,0) .. controls (3.31,0.3) and (6.95,1.4) .. (10.93,3.29)   ;
\draw    (140.8,239.78) -- (168.02,264.65) ;
\draw [shift={(169.5,266)}, rotate = 222.42] [color={rgb, 255:red, 0; green, 0; blue, 0 }  ][line width=0.75]    (10.93,-3.29) .. controls (6.95,-1.4) and (3.31,-0.3) .. (0,0) .. controls (3.31,0.3) and (6.95,1.4) .. (10.93,3.29)   ;
\draw    (416.5,151) .. controls (399.13,163.55) and (382.69,157.47) .. (366.29,149.84) ;
\draw [shift={(364.5,149)}, rotate = 25.2] [color={rgb, 255:red, 0; green, 0; blue, 0 }  ][line width=0.75]    (10.93,-3.29) .. controls (6.95,-1.4) and (3.31,-0.3) .. (0,0) .. controls (3.31,0.3) and (6.95,1.4) .. (10.93,3.29)   ;
\draw    (309.33,198.18) -- (334.45,157.7) ;
\draw [shift={(335.5,156)}, rotate = 121.82] [color={rgb, 255:red, 0; green, 0; blue, 0 }  ][line width=0.75]    (10.93,-3.29) .. controls (6.95,-1.4) and (3.31,-0.3) .. (0,0) .. controls (3.31,0.3) and (6.95,1.4) .. (10.93,3.29)   ;
\draw   (336.51,134.58) .. controls (336.51,125.77) and (343.49,118.62) .. (352.1,118.62) .. controls (360.71,118.62) and (367.69,125.77) .. (367.69,134.58) .. controls (367.69,143.39) and (360.71,150.54) .. (352.1,150.54) .. controls (343.49,150.54) and (336.51,143.39) .. (336.51,134.58) -- cycle ;
\draw    (378.37,133.21) -- (409.33,133.21) ;
\draw [shift={(411.33,133.21)}, rotate = 180] [color={rgb, 255:red, 0; green, 0; blue, 0 }  ][line width=0.75]    (10.93,-3.29) .. controls (6.95,-1.4) and (3.31,-0.3) .. (0,0) .. controls (3.31,0.3) and (6.95,1.4) .. (10.93,3.29)   ;
\draw   (416.57,135.67) .. controls (416.57,126.86) and (423.55,119.71) .. (432.16,119.71) .. controls (440.77,119.71) and (447.75,126.86) .. (447.75,135.67) .. controls (447.75,144.48) and (440.77,151.63) .. (432.16,151.63) .. controls (423.55,151.63) and (416.57,144.48) .. (416.57,135.67) -- cycle ;
\draw    (354.13,110.89) -- (369.46,85.71) ;
\draw [shift={(370.5,84)}, rotate = 121.34] [color={rgb, 255:red, 0; green, 0; blue, 0 }  ][line width=0.75]    (10.93,-3.29) .. controls (6.95,-1.4) and (3.31,-0.3) .. (0,0) .. controls (3.31,0.3) and (6.95,1.4) .. (10.93,3.29)   ;
\draw    (556.5,96) .. controls (549.61,127.52) and (432.1,184.27) .. (400.39,200.3) ;
\draw [shift={(399,201)}, rotate = 333.05] [color={rgb, 255:red, 0; green, 0; blue, 0 }  ][line width=0.75]    (10.93,-3.29) .. controls (6.95,-1.4) and (3.31,-0.3) .. (0,0) .. controls (3.31,0.3) and (6.95,1.4) .. (10.93,3.29)   ;
\draw   (178.03,272.82) .. controls (178.03,264.01) and (185.01,256.87) .. (193.62,256.87) .. controls (202.23,256.87) and (209.21,264.01) .. (209.21,272.82) .. controls (209.21,281.64) and (202.23,288.78) .. (193.62,288.78) .. controls (185.01,288.78) and (178.03,281.64) .. (178.03,272.82) -- cycle ;
\draw    (218.23,276.37) -- (249.19,276.37) ;
\draw [shift={(251.19,276.37)}, rotate = 180] [color={rgb, 255:red, 0; green, 0; blue, 0 }  ][line width=0.75]    (10.93,-3.29) .. controls (6.95,-1.4) and (3.31,-0.3) .. (0,0) .. controls (3.31,0.3) and (6.95,1.4) .. (10.93,3.29)   ;
\draw   (256.66,273.73) .. controls (256.66,264.92) and (263.64,257.78) .. (272.24,257.78) .. controls (280.85,257.78) and (287.83,264.92) .. (287.83,273.73) .. controls (287.83,282.55) and (280.85,289.69) .. (272.24,289.69) .. controls (263.64,289.69) and (256.66,282.55) .. (256.66,273.73) -- cycle ;
\draw    (297.86,277.28) -- (328.82,277.28) ;
\draw [shift={(330.82,277.28)}, rotate = 180] [color={rgb, 255:red, 0; green, 0; blue, 0 }  ][line width=0.75]    (10.93,-3.29) .. controls (6.95,-1.4) and (3.31,-0.3) .. (0,0) .. controls (3.31,0.3) and (6.95,1.4) .. (10.93,3.29)   ;
\draw   (339.28,278.65) .. controls (339.28,269.83) and (346.42,262.69) .. (355.23,262.69) .. controls (364.04,262.69) and (371.19,269.83) .. (371.19,278.65) .. controls (371.19,287.46) and (364.04,294.6) .. (355.23,294.6) .. controls (346.42,294.6) and (339.28,287.46) .. (339.28,278.65) -- cycle ;
\draw    (381.5,242) -- (369.55,261.3) ;
\draw [shift={(368.5,263)}, rotate = 301.76] [color={rgb, 255:red, 0; green, 0; blue, 0 }  ][line width=0.75]    (10.93,-3.29) .. controls (6.95,-1.4) and (3.31,-0.3) .. (0,0) .. controls (3.31,0.3) and (6.95,1.4) .. (10.93,3.29)   ;
\draw    (345.5,299) .. controls (321.37,320.54) and (244.84,315.17) .. (204.74,296.97) ;
\draw [shift={(202.93,296.13)}, rotate = 25.5] [color={rgb, 255:red, 0; green, 0; blue, 0 }  ][line width=0.75]    (10.93,-3.29) .. controls (6.95,-1.4) and (3.31,-0.3) .. (0,0) .. controls (3.31,0.3) and (6.95,1.4) .. (10.93,3.29)   ;
\draw   (375.66,72.73) .. controls (375.66,63.92) and (382.64,56.78) .. (391.24,56.78) .. controls (399.85,56.78) and (406.83,63.92) .. (406.83,72.73) .. controls (406.83,81.55) and (399.85,88.69) .. (391.24,88.69) .. controls (382.64,88.69) and (375.66,81.55) .. (375.66,72.73) -- cycle ;
\draw    (418.86,72.28) -- (449.82,72.28) ;
\draw [shift={(451.82,72.28)}, rotate = 180] [color={rgb, 255:red, 0; green, 0; blue, 0 }  ][line width=0.75]    (10.93,-3.29) .. controls (6.95,-1.4) and (3.31,-0.3) .. (0,0) .. controls (3.31,0.3) and (6.95,1.4) .. (10.93,3.29)   ;
\draw   (460.28,73.65) .. controls (460.28,64.83) and (467.42,57.69) .. (476.23,57.69) .. controls (485.04,57.69) and (492.19,64.83) .. (492.19,73.65) .. controls (492.19,82.46) and (485.04,89.6) .. (476.23,89.6) .. controls (467.42,89.6) and (460.28,82.46) .. (460.28,73.65) -- cycle ;
\draw    (505.27,72.28) -- (536.22,72.28) ;
\draw [shift={(538.22,72.28)}, rotate = 180] [color={rgb, 255:red, 0; green, 0; blue, 0 }  ][line width=0.75]    (10.93,-3.29) .. controls (6.95,-1.4) and (3.31,-0.3) .. (0,0) .. controls (3.31,0.3) and (6.95,1.4) .. (10.93,3.29)   ;
\draw   (546.69,73.65) .. controls (546.69,64.83) and (553.83,57.69) .. (562.64,57.69) .. controls (571.45,57.69) and (578.59,64.83) .. (578.59,73.65) .. controls (578.59,82.46) and (571.45,89.6) .. (562.64,89.6) .. controls (553.83,89.6) and (546.69,82.46) .. (546.69,73.65) -- cycle ;

\draw (35.2,210.23) node [anchor=north west][inner sep=0.75pt]   [align=left] {0};
\draw (117.6,211.15) node [anchor=north west][inner sep=0.75pt]   [align=left] {21};
\draw (201.85,209.23) node [anchor=north west][inner sep=0.75pt]   [align=left] {28};
\draw (289.85,210.23) node [anchor=north west][inner sep=0.75pt]   [align=left] {9};
\draw (376.73,212.15) node [anchor=north west][inner sep=0.75pt]   [align=left] {3};
\draw (464.92,211.15) node [anchor=north west][inner sep=0.75pt]   [align=left] {1};
\draw (342.49,127.99) node [anchor=north west][inner sep=0.75pt]   [align=left] {24};
\draw (427.44,126.08) node [anchor=north west][inner sep=0.75pt]   [align=left] {8};
\draw (382.59,65.37) node [anchor=north west][inner sep=0.75pt]   [align=left] {29};
\draw (188.85,264.23) node [anchor=north west][inner sep=0.75pt]   [align=left] {7};
\draw (265.85,265.23) node [anchor=north west][inner sep=0.75pt]   [align=left] {2};
\draw (347.73,270.15) node [anchor=north west][inner sep=0.75pt]   [align=left] {22};
\draw (467.73,66.15) node [anchor=north west][inner sep=0.75pt]   [align=left] {31};
\draw (554.92,65.15) node [anchor=north west][inner sep=0.75pt]   [align=left] {10};

\end{tikzpicture}

\caption{The subgraph of the multiplication transducer for $m=64$ that encodes all representations of $64$ as a quotient of sums of distinct powers of 3.}\label{digraph for 64}
\end{figure}

We begin by applying the algorithm as usual, stepping down from 9, and this yields $64=\frac{3^5+3^2+3+1}{3+1}$.
Letting $p(x)=x^5+x^2+x+1$ and $q(x)=x+1$, we have $64=\frac{p(3)}{q(3)}$.  Since $p(x)=\left(x^4-x^3+x^2+1\right)q(x)$, this is a universal representation of $64$.  Since $q(x)\neq1$, this universal representation is nontrivial.

Suppose we continued with the algorithm beyond $28$, the first carry value that is a sum of distinct powers of 3.  Would we get local representations or universal representations?  The first choice to be made occurs when the carry value is 9.  We will show that any representation of $64$ that takes a step up from carrying 9 to carrying 24 upon first arriving at the carry value 9 must be a local representation.  This is true for both decomposable and indecomposable representations, and we prove it using the same approach as in the proof of Theorem \ref{22allhard}.

\begin{theorem}
Any representation of $64$ that steps up upon first arriving at the carry value 9 is a local representation.  This holds for both decomposable and indecomposable representations.
\end{theorem}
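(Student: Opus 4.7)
The plan is to follow the strategy of Theorem \ref{22allhard}. Suppose for contradiction that $64 = \frac{p(3)}{q(3)}$ is a universal representation whose corresponding walk first reaches carry value $9$ and then steps up, and set $g(x) := \frac{p(x)}{q(x)} \in \mathbb{Z}[x]$. By Theorem \ref{T:feasible}, $\deg g(x) = 4$, so I would write $g(x) = 1 + e_1 x + e_2 x^2 + e_3 x^3 + x^4$ and $p(x) = 1 + \sum_{i=1}^{r-1} c_i x^i + x^r$, $q(x) = 1 + \sum_{i=1}^{s-1} d_i x^i + x^s$ with $c_i, d_i \in \{0,1\}$.

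First I would pin down $g(x)$. Evaluating $g(3) = 64$ gives $e_1 + 3e_2 + 9e_3 = -6$. Since $e_1 = c_1 - d_1 \in \{-1,0,1\}$ and $e_1 \equiv 0 \pmod 3$, we get $e_1 = 0$; then $e_2 + 3 e_3 = -2$, and because $e_2 = c_2 - d_2 \in \{-1,0,1\}$ with $e_2 \equiv 1 \pmod 3$, we obtain $e_2 = 1$ and $e_3 = -1$. Thus $g(x) = x^4 - x^3 + x^2 + 1$.

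Next I would use the hypothesis that the walk first reaches $9$ via a step up to read off the initial coefficients of $q$. Inspection of Figure \ref{digraph for 64} (or a direct check of the algorithm's transitions) shows that the only path from $0$ which first arrives at the vertex $9$ is $0 \to 21 \to 28 \to 9$, which records $c_0 = d_0 = 1$, $c_1 = d_1 = 1$, $c_2 = 1$, $d_2 = 0$; the step up from $9$ then forces $c_3 = d_3 = 1$. These values are determined purely by the start of the walk, so the argument applies equally to indecomposable and decomposable representations.

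Finally I would derive a contradiction by computing
\[
p_5 = g_0 q_5 + g_1 q_4 + g_2 q_3 + g_3 q_2 + g_4 q_1 = q_5 + q_3 - q_2 + q_1 = q_5 + 2,
\]
where the numeric values $q_1 = 1$, $q_2 = 0$, $q_3 = 1$ come from the walk. Since $q(x) \in \mathcal{P}$ implies $q_5 \in \{0,1\}$ (with the convention that coefficients beyond $\deg q$ are $0$), this forces $p_5 \geq 2$, contradicting $p(x) \in \mathcal{P}$. The main obstacle, such as it is, will be the careful verification that $g(x)$ and the initial coefficients of $q(x)$ are forced as claimed, independently of any choice made in the walk after the first visit to $9$; once that is established, the polynomial identity at $x^5$ immediately yields the contradiction.
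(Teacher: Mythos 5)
Your proposal is correct and follows essentially the same approach as the paper: force $g(x) = x^4 - x^3 + x^2 + 1$ from $g(3) = 64$ and the $\{0,1\}$ constraints, read off $d_1 = 1$, $d_2 = 0$, $d_3 = 1$ from the only walk $0 \to 21 \to 28 \to 9 \to 24$, and derive the contradiction from the coefficient of $x^5$. The only difference is cosmetic: the paper splits into cases $c_4 = d_4 = 0$ and $c_4 = d_4 = 1$ before computing $c_5 = d_5 + 2$, whereas you correctly observe that since $e_1 = 0$ the value of $d_4$ never enters the $x^5$ coefficient, so no case split is needed.
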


\begin{proof}
Suppose to the contrary that $64 = \frac{p(3)}{q(3)}$ for some $p(x),q(x)\in\mathcal{P}$, $g(x):=\frac{p(x)}{q(x)}\in\mathbb{Z}[x]$, and the representation $\frac{p(3)}{q(3)}$ results from stepping up upon first arriving at the carry value 9. Let $r=\deg p(x)$ and $s=\deg q(x)$.  By Theorem \ref{T:feasible},
\[
\frac 23 \cdot 3^{r-s} < 64 < \frac 32 \cdot 3^{r-s},
\]
which implies $r -s =
\deg g(x) = 4$. Since the constant terms and the coefficients of $x^r$ in $p(x)$ and
$x^s$ in $q(x)$ are 1, we may then write $g(x)=x^4+e_3x^3+e_2x^2+e_1x+1$. It is now convenient to take $p(x) = 1 + \sum\limits_{i=1}^r  c_i x^i$ and $q(x) = 1 + \sum\limits_{i=1}^s d_i x^i$, where $c_i, d_i \in \{0,1\}$.

Since $64=g(3)=81+27e_3+9e_2+3e_1+1$, we have $-6=9e_3+3e_2+e_1$, and $e_1\equiv0\pmod 3$.  Since $e_1=c_1-d_1$, we see $c_1=d_1$ and $e_1=0$.  After the first step of the algorithm, we have a carry value of $21 \equiv 0 \pmod 3$ and thus must make a choice.  As in the case of $m=100$, one step, the step from $21$ down to 7, does not lead to any representations. Thus, to have a chance at actually obtaining a representation of $64$, we must step up from $21$ to $28$, and this corresponds to taking $c_1=d_1=1$.  Note that $e_1=0$ gives $-6=9e_3+3e_2$, so $-2=3e_3+e_2$, and $e_2\equiv1\pmod 3$.  Since $e_2=c_2-d_2-d_1e_1=c_2-d_2$, we see $c_2=1, d_2=0$, and $e_2=1$.  Then $-3=3e_3$, so $e_3=-1$.  Then $c_3=d_3+d_2e_1+d_1e_2+e_3$ is $c_3=d_3$.  Since we are interested in representations that step up from 9 to 24, we take $c_3=d_3=1$.  Then, substituting the values we have already determined, $c_4=d_4+d_3e_1+d_2e_2+d_1e_3+1$ becomes $c_4=d_4$.

Suppose $c_4=d_4=0$.  Then $c_5=d_5+d_4e_1+d_3e_2+d_2e_3+d_1$ is $c_5=d_5+2$, which is a contradiction to $c_5, d_5\in\{0,1\}$.

Now suppose $c_4=d_4=1$.  Then $c_5=d_5+d_4e_1+d_3e_2+d_2e_3+d_1$ is $c_5=d_5+2$, which is a contradiction to $c_5, d_5\in\{0,1\}$.

Since in all cases we arrive at a contradiction, we may conclude that $q(x)\nmid p(x)$, and thus all representations of $64$ that choose the step up when first carrying 9 are local.
\end{proof}

We conclude this section by discussing a specific example of a local representation of 64, one that steps up upon first arriving at the carry value $9$.

\begin{example}Suppose that $m=64=\frac{p}{q}\in\mathcal{A}/\mathcal{A}$.  As usual, we start with a carry value of 0, and we choose to add $m=64$ to the carry value, record $1\cdot3^0$ into $q$ and into $p$, then subtract 1 from the carry value, and move on to Step 1.  The remaining unrecorded value is $(64-1)\cdot3^0=63\cdot3^0=21\cdot3^1$, so the carry value at the end of Step 1 is now $21\equiv0\pmod{3}$.  As in the case of $m=100$, we must make a choice.  One choice, the step from $21$ down to 7, does not lead to any representations. Thus, to have a chance at actually obtaining a representation of $64$, we must step up from $21$ to $28$.  To implement this choice, we add $m=64$ to the carry value again, record $1\cdot3^1$ into $q$ and into $p$, and then subtract 1 from the carry value.  The remaining unrecorded value is $(21+64-1)\cdot3^1=84\cdot3^1=28\cdot3^2$, so the new carry value at the end of Step 2 is 28.  Since $28=3^3+3^0$ is a sum of distinct powers of 3, the algorithm would terminate here, and we would append the digits of the base 3 representation $28=[1001]_3$ to the left of $p$ to get the nontrivial universal representation $64=\frac{[100111]_3}{[11]_3}=\frac{3^5+3^2+3+1}{3+1}=\frac{256}{4}$ discussed above.  

At present, we seek to characterize representations of $64$ that step up upon first arriving at the carry value $9$.  To obtain such a representation of 64, we must continue the algorithm instead of allowing the algorithm to terminate in Step 2, when our carry value is $28\equiv1\pmod 3$.  To continue, we record $1\cdot3^2$ into $p$, subtract 1 from the carry value, and arrive at the unrecorded value $(28-1)\cdot3^2=27\cdot3^2=9\cdot3^3$.  The new carry value at the end of Step 3 is 9.  Because we are exploring representations of $64$ that step up upon first arriving at the carry value $9$, we choose to add $m=64$ to the carry value and record $1\cdot3^3$ into $q$ and into $p$.  Then, after subtracting 1 from the carry value, the remaining unrecorded value is $(9+64-1)\cdot3^3=72\cdot3^3=24\cdot3^4$, so our carry value at the end of Step 4 is 24.  We choose again to add $m=64$ to the carry value and record $1\cdot3^4$ into $q$ and into $p$.  After subtracting 1 from the carry value, we have the remaining unrecorded value $(24+64-1)\cdot3^4=87\cdot3^4=29\cdot3^5$, so we are left with a new carry value of 29 at the end of Step 5.  Since $29\equiv2\pmod{3}$, we must add $m=64$ to the carry value and correspondingly record $1\cdot3^5$ into $q$.  Then the remaining unrecorded value is $(29+64)\cdot3^5=93\cdot3^5=31\cdot3^6$, so our carry value at the end of Step 6 is $31$.  Since $31=3^3+3^1+3^0$ is a sum of distinct powers of 3, we can terminate this extension of the algorithm by appending the digits of the base 3 representation $31=[1011]_3$ to the left of $p$.  This yields the local representation $$64=\frac{[1011011111]_3}{[111011]_3}=\frac{3^9+3^7+3^6+3^4+3^3+3^2+3+1}{3^5+3^4+3^3+3+1}=\frac{22720}{355}.$$  Notice that this representation is $64=\frac{p(3)}{q(3)}$ where $p(x)=x^9+x^7+x^6+x^4+x^3+x^2+x+1$ and $q(x)=x^5+x^4+x^3+x+1$, and that $q(x)\nmid p(x)$.
\end{example}


\section{Catalog of knowledge}\label{catalog}
This table classifies all indecomposable representations of integers $m\equiv 1\pmod 3$ in the intervals given in \ref{feasible} up to $\frac{3}{2}\cdot 3^5$.  For some integers, such as 22, 34, and 229, information is also known about decomposable representations, as discussed for $22$ in Theorem \ref{22allhard} and for $34$ in Corollary \ref{34allhard}. In the table, $\bigstar$ indicates that $m$ has representations of that type, an empty cell indicates $m$ does not have representations of that type, and $?$ indicates that it is not known whether $m$ has representations of that type.  The only $?$ appears for local representations of $289$.  

In addition to the techniques already discussed in this paper, another approach we employed to find representations of an integer $m$ utilized 
Mathematica as follows. For all Newman polynomials $p(x)$ up to a fixed degree, construct a table of the values $p(3)$. It was computationally feasible for us to do this up to degree $18$.  Search for those values $p(3)$ which are multiples of $m$. If $p(3) = m\cdot t$, then see if the ternary expansion of $t$ uses only the digits 0 and 1.
If so, then $t = q(3)$ for a Newman polynomial $q(x)$. Finally, check if $q(x)\mid p(x)$.

\newpage

\begin{center}
\renewcommand{\arraystretch}{.775}
\begin{minipage}{.3\textwidth}
\begin{center}
    \begin{tabular}{|r|c|c|}
    \hline
       m&  universal& local\\      
       \hline
       1 & $\bigstar$ & \\
       \hline
       4 & $\bigstar$ & \\
       \hline
       7 & $\bigstar$ & \\
       \hline
          10 & $\bigstar$ & \\
       \hline
       13 & $\bigstar$ &  \\
       \hline
       19 & $\bigstar$ &  \\
       \hline
       22 & & $\bigstar$ \\
       \hline
       25 & $\bigstar$ & 
        \\
       \hline
       28  & $\bigstar$ & \\
       \hline
       31 &$\bigstar$  & $\bigstar$  \\
       \hline
       34   & & $\bigstar$ \\
       \hline
       37 &$\bigstar$  & $\bigstar$  \\
       \hline
       40 & $\bigstar$ &  \\
       \hline
       55 & $\bigstar$ &  \\
       \hline
       58 & &  $\bigstar$  \\
       \hline
       61 & $\bigstar$ &  \\
       \hline
       64 & $\bigstar$ & $\bigstar$  \\
       \hline
       67 &  & $\bigstar$ \\
       \hline
       70 & $\bigstar$ & $\bigstar$  \\
       \hline
       73 & $\bigstar$ &  \\
       \hline
       76  &$\bigstar$ & $\bigstar$  \\
       \hline
       79 & $\bigstar$ &  \\
       \hline
       82 & $\bigstar$ &  \\
       \hline
       85 & $\bigstar$ & $\bigstar$  \\
       \hline
       88 &$\bigstar$ & $\bigstar$  \\
       \hline 
       91 & $\bigstar$ & \\
       \hline
       94 &$\bigstar$  & $\bigstar$   \\
       \hline
       97  &  & $\bigstar$ \\
       \hline
       100 & $\bigstar$ &  \\
       \hline
       103  &  & $\bigstar$ \\
       \hline
       106 &  & $\bigstar$  \\
       \hline
       109  &$\bigstar$ & $\bigstar$ \\
       \hline
       112 &$\bigstar$  & $\bigstar$  \\
       \hline
       115 &  & $\bigstar$  \\
       \hline
       118 &$\bigstar$ & $\bigstar$ \\
       \hline
       121 & $\bigstar$ &   \\
       \hline
       
                    \end{tabular}
                    \end{center}
                    \vspace{4.65cm}
                    \end{minipage}%
\begin{minipage}{.3\textwidth}
\begin{center}
    \begin{tabular}{|r|c|c|}
    \hline
       m&  universal& local\\      
       \hline
        163 & $\bigstar$ &  \\
       \hline
	166 &  & $\bigstar$  \\
       \hline
       169 & $\bigstar$ & $\bigstar$ \\
       \hline
       172 &  & $\bigstar$ \\
       \hline
       175 & & $\bigstar$  \\
       \hline
       178 & & $\bigstar$ \\
       \hline
       181 & $\bigstar$ &  \\
       \hline
       184 & & $\bigstar$  \\
       \hline
       187 &$\bigstar$ & $\bigstar$  \\
       \hline
       190 &$\bigstar$ &$\bigstar$\\
       \hline
       193 & & $\bigstar$ \\
       \hline
       196 &$\bigstar$ & $\bigstar$ \\
       \hline
       199 & & $\bigstar$ \\
       \hline
       202 & & $\bigstar$  \\
       \hline
       205 & & $\bigstar$  \\
       \hline
       208 & & $\bigstar$ \\
       \hline
       211 & $\bigstar$ & $\bigstar$ \\
       \hline
       214 & & $\bigstar$ \\
       \hline
       217& $\bigstar$ &  \\
       \hline
       220&$\bigstar$ & $\bigstar$  \\
       \hline
       223 &$\bigstar$ & $\bigstar$ \\
       \hline
       226 &  & $\bigstar$  \\
       \hline
       229 & & $\bigstar$ \\
       \hline
       232 & & $\bigstar$ \\
       \hline
       235 & $\bigstar$ & \\
       \hline
       238 & & $\bigstar$ \\
       \hline
       241& $\bigstar$ &  \\
       \hline
       244 & $\bigstar$ & \\
       \hline
       247 &$\bigstar$ & $\bigstar$  \\
       \hline
       250 &$\bigstar$ & $\bigstar$ \\
       \hline
       253 &$\bigstar$ & $\bigstar$ \\
       \hline
       256 &$\bigstar$ & $\bigstar$ \\
       \hline
       259 & & $\bigstar$ \\
       \hline
       262&  & $\bigstar$  \\
       \hline
       265 & & $\bigstar$ \\
       \hline
       268 &  & $\bigstar$  \\
       \hline
       271&$\bigstar$ & $\bigstar$  \\
       \hline
       274& $\bigstar$ & $\bigstar$  \\
       \hline
       277 & & $\bigstar$ \\
       \hline
       280 &$\bigstar$ & $\bigstar$  \\
       \hline
       283 &$\bigstar$ & $\bigstar$   \\
       \hline
       286 & & $\bigstar$ \\
       \hline
       289 & $\bigstar$ & $?$  \\
       \hline
       292 & $\bigstar$ & $\bigstar$  \\
       \hline
       295 & & $\bigstar$ \\
       \hline
       298 & & $\bigstar$ \\
       \hline
       
   \end{tabular}
   \end{center}
   \end{minipage}%
	\begin{minipage}{.3\textwidth}	
	\begin{center}
    \begin{tabular}{|r|c|c|}
    \hline
       m&  universal& local\\  
       \hline 
     301  &$\bigstar$ & $\bigstar$ \\
       \hline
304 & & $\bigstar$\\
       \hline

307 & & $\bigstar$\\
       \hline
310 & & $\bigstar$\\
       \hline
313 & & $\bigstar$\\
       \hline
316&$\bigstar$ & $\bigstar$\\
       \hline
319 &$\bigstar$ & $\bigstar$\\
       \hline
322 &  & $\bigstar$\\
       \hline
325&$\bigstar$ & $\bigstar$\\
       \hline
328&$\bigstar$ & $\bigstar$\\
       \hline
331& & $\bigstar$\\
       \hline
334&$\bigstar$ & $\bigstar$\\
       \hline
337& $\bigstar$ & $\bigstar$\\
       \hline
340& & $\bigstar$\\
\hline
343&  &$\bigstar$\\
\hline
346&  &$\bigstar$\\
\hline
349 &  &$\bigstar$\\
\hline
352 & $\bigstar$ &$\bigstar$\\
\hline
355 & $\bigstar$ &$\bigstar$\\
\hline
358 &  &$\bigstar$\\
\hline
361 & $\bigstar$ &$\bigstar$\\
\hline
364 &$\bigstar$ &\\
       \hline

   \end{tabular}
   \end{center}
   \vspace{11.26cm}
	\end{minipage}%
	\end{center}

\section{Further directions}\label{FutureWork}
Ultimately, we want to know which integers can be written as a quotient of sums of distinct powers of 3.  An additional long-term goal is to find a complete classification of the types of representations
integers can have as quotients of sums of distinct powers of 3. For example, we seek a complete description of the 
integers congruent to 1 mod 3 in \eqref{feasible} which do not lie in $\mathcal A/\mathcal A$. This paper makes progress toward this goal and also provides a launching point for several related questions and directions. 

\subsection{Properties of the Directed Graphs}
In Figure \ref{subgraphoftransducer} in Section \ref{sec:algorithm}, we introduce an example of an expanded digraph generated as a subgraph of the multiplication transducer that performs multiplication by $m$ in base 3. These digraphs are worthy of study in their own right, beyond the information they encode regarding representations of integers as quotients of sums of distinct powers of 3. For any positive integer $m=3t+1$, define the digraph $D_m$ as follows:
The vertices of $D_m$ are a subset of $\{0,\dots,\lfloor m/2 \rfloor\}$. There are four
kinds of directed edges in $D_m$.  What kind of edge we see originating at a given vertex depends on the congruence class of the given vertex modulo 3.  The four kinds of edges are
\[
\{3k\rightarrow k, \quad3k\rightarrow k+t,\quad3k+1\rightarrow k, \quad3k+2\rightarrow k+t+1\}.
\]
If $i$ is in the vertex set of $D_m$ and $i \rightarrow j$ is in the set of possible edges above, then $j$ must also belong to the vertex set of $D_m$.

This digraph $D_m$ is the digraph produced by applying the algorithm of Section \ref{sec:algorithm} to $m$, as done in Figure \ref{subgraphoftransducer} for $m=22$, in Figure \ref{digraph for 100} for $m=100$, and in Figure \ref{digraph for 64} for $m=64$,  and the four kinds of edges correspond to the four possible ways to move from Step $i$ to Step $i+1$ in the algorithm.
Every representation of $m$ as $\frac{p(3)}{q(3)}$ with $p(x),q(x) \in \mathcal P$ can be 
read off from the vertices in a walk from the vertex 0 to itself. When no such walk exists, as
in the case $m=529$, there is no such representation. More information about these digraphs and various related topics will appear in \cite{ADRS}.

\subsection{Extending to different bases}
It is natural to ask the questions of this paper for digital bases $b \neq 3$.  If $b=2$, then the standard binary representation for $n$, $n = \sum_i a_i2^i$ with $a_i \in \{0,1\}$, gives $n = p(2)$, where $p(x) =  \sum_i a_ix^i \in \mathcal P$.

For any $b\ge 2$, local representations still exist: if $p(x),q(x) \in \mathcal P$ and
$p(x) = q(x)t(x)$ for $t(x) \in \mathbb{Z}[x]$, then $t(b) = \frac{p(b)}{q(b)}$ will
be a quotient of sums of distinct powers of $b$. For example, 
\[
b^2 - b + 1 = [(b-1)\,1]_b = \frac{b^3+1}{b+1}.
\]
On the other hand, $b^2 \equiv 1 \pmod{b+1}$, and
\begin{equation}\label{b's}
\frac{b^{2b} + \cdots + b^4 + b^2 + 1}{b+1} \in \mathbb Z.
\end{equation}
Further, $ (x +1) \nmid \left(x^{2b} + \cdots + x^4 + x^2 + 1\right)$, because $x=-1$ is not a root
of the polynomial on the right. Thus, for every base $b$, there are local representations 
of quotients of distinct sums of powers of $b$; for $b=3$, \eqref{b's} is a local 
representation of $\frac{729+81+9+1}{3+1} = 205$. 

The phrase ``quotients of sums of distinct powers of $b$" may be rewritten as ``quotients of integers with base $b$ representations using only digits from $\{0, 1\}$." Written this way, we can extend our work to ask similar questions for different sets of digits when $b\geq 3$. The fourth-named author studied generalizations of this problem in her dissertation \cite{SST}, exploring quotients of integers which, when written in base $b$, have a restricted digit set.

\section{Acknowledgments}
The second author was supported by an AMS-Simons Travel Grant.  The third and fourth authors wish to acknowledge the warm hospitality of the Department of Mathematics of the University of Texas at Tyler during a visit in June 2022.


\end{document}